\font\teneufm=eufm10 \font\seveneufm=eufm7
\font\fiveeufm=eufm5
\def\frak#1{{\fam\eufmfam\relax#1}}
\let\goth\mathfrak
\def\gg{\goth g}
\def\GG{\frak G}
\def\gg{\goth g}
\def\gh{\goth h}
\def\gg{\goth g}
\def\1{\mbox{\bf 1}}
\def\IP{\Bbb P}
 \DeclareMathOperator{\Hom}{Hom}
\DeclareMathOperator{\Aut}{Aut} 
\DeclareMathOperator{\Out}{Out}
\DeclareMathOperator{\End}{End} 
\DeclareMathOperator{\Ad}{Ad} 
  \DeclareMathOperator{\GL}{\bf GL}
\newtheorem{theorem}{Theorem}[section]%Theorems et al in italic font.
\newtheorem{corollary}[theorem]{Corollary}
\newtheorem{lemma}[theorem]{Lemma}
\newtheorem{proposition}[theorem]{Proposition}
\theoremstyle{definition}%Theorems et al in roman font.
\newtheorem{remark}[theorem]{Remark}
\newtheorem{definition}[theorem]{Definition}
\numberwithin{equation}{section}
\def\Z{\Bbb Z}
\def\A{\mathbb A}
\def\bB{\text{\rm \bf B}}
\def\bE{\text{\rm \bf E}}
\def\bF{\text{\rm \bf F}}
\def\bG{\text{\rm \bf G}}
\def\bN{\text{\rm \bf N}}
\def\bW{\text{\rm \bf W}}
\def\bX{\text{\rm \bf X}}
\def\Mor{\text{\rm Mor}}
\def\bZ{\rm \bf{Z}}
\def\CL{{\mathcal L}}
\def\ad{\text{\rm ad}}
\def\bT{\text{\rm \bf T}}
\def\L{\mathcal{L}}
\def\bP{\text{\rm \bf P}}
\def\bY{\text{\rm \bf Y}}
\def\X{\text{\rm \bf X}}
\def\ba{\pmb a}
\def\bg{{\bf g}}
\def\bk{\mathbf k}
\def\bs{{\pmb\sigma}}
\def\bmu{{\pmb\mu }}
\def\wh{\widehat}
\def\wt{\widetilde}
\def\us{\underset}
\def\os{\overset}
\def\ol{\overline}
\def\Zar{\text{\rm Zar}}
\def\et{\text{\rm \'et}}
\def\q{\quad}
\def\vs{\vskip.3cm}
\def\noi{\noindent}
\def\2int{\mathop{2\int}\nolimits}
\def\End{\mathop{\rm  End}\nolimits}
\def\dim{\mathop{\rm dim}\nolimits}
\def\Spec{\mathop{\rm Spec}\nolimits}
\def\Hom{\mathop{\rm Hom}\nolimits}
\def\Pic{\mathop{\rm Pic}\nolimits}
\def\Aut{\text{\rm{Aut}}}
\def\Out{\text{\rm{Out}}}
\def\bAut{\text{\bf{Aut}}}
\def\bOut{\text{\bf{Out}}}
\def\resp.{\mathop{\rm resp.}\nolimits}
\def\Im{\mathop{\rm Im}\nolimits}
\def\lgr{\longrightarrow}
\def\la{\longleftarrow}
\font\math=cmmi10
\def\varpi{\hbox{\math\char'44}}
\def\simlgr{\buildrel\sim\over\lgr}
\def\pa{\S\kern.15em }
\def\un{\uppercase\expandafter{\romannumeral 1}}
\def\deux{\uppercase\expandafter{\romannumeral 2}}
\def\trois{\uppercase\expandafter{\romannumeral 3}}
\def\quatre{\uppercase\expandafter{\romannumeral 4}}
\def\cinq{\uppercase\expandafter{\romannumeral 5}}
\def\six{\uppercase\expandafter{\romannumeral 6}}
\def\gg{\goth g}
\def\et{\acute et}
\def\hfl#1#2#3{\smash{\mathop{\hbox to#3{\rightarrowfill}}\limits
^{\scriptstyle#1}_{\scriptstyle#2}}}
\def\gfl#1#2#3{\smash{\mathop{\hbox to#3{\leftarrowfill}}\limits
^{\scriptstyle#1}_{\scriptstyle#2}}}
\begin{document}

\title{Three-point Lie algebras and Grothendieck's dessins d'enfants}

\author{V. Chernousov}
\address{Department of Mathematics, University of Alberta,
    Edmonton, Alberta T6G 2G1, Canada}
\thanks{ V. Chernousov was partially supported by the Canada Research
Chairs Program and an NSERC research grant} \email{chernous@math.ualberta.ca}

\author{P. Gille}
\address{UMR 5208 du CNRS - 
Institut Camille Jordan - Universit\'e Claude Bernard Lyon 1,
43 boulevard du 11 novembre 1918,
69622 Villeurbanne cedex - France.
 \newline 
  \indent Institute of Mathematics Simion Stoilow of the Romanian Academy,
  Calea Grivitei 21, 
 RO-010702 Bucharest - Romania.}
\thanks{P. Gille was supported by the IDEI Romanian project PN-II-ID-PCE-2012-4.}
 \email{gille@math.univ-lyon1.fr}

\author{A. Pianzola}
\address{Department of Mathematics, University of Alberta,
    Edmonton, Alberta T6G 2G1, Canada.
    \newline
 \indent Centro de Altos Estudios en Ciencia Exactas, Avenida de Mayo 866, (1084) Buenos Aires, Argentina.}
\thanks{A. Pianzola wishes to thank NSERC and CONICET for their
continuous support}\email{a.pianzola@gmail.com}

%\author{V. Chernousov$^{\rm1}$, P. Gille$^{\rm 2}$ and A. Pianzola$^{\rm 1,3}$}
\date{}
 %\maketitle

%\smallskip

%$^{\rm 1}${\it Department of Mathematical Sciences, University of
%Alberta, Edmonton, Alberta T6G 2G1, Canada.}

%$^{\rm 2}${\it UMR 8553 du CNRS, Ecole Normale Sup\'erieure, 45 rue
%d'Ulm, 75005 Paris, France. }

%$^{\rm 3}${\it Centro de Altos Estudios en Ciencia Exactas, Avenida de Mayo 866, (1084) 
%Buenos Aires, Argentina.}
 %
% \maketitle
 \begin{abstract}
 \noindent We define and classify the analogues of the affine Kac-Moody Lie algebras for the ring of functions on the complex projective line minus three points. The classification is given in terms of Grothendieck's dessins 
d'enfants. We also study the question of conjugacy of Cartan subalgebras for these algebras.  \\

\noindent {\em Keywords:} Reductive group scheme, dessins d'enfants, torsor, loop algebra,
 affine and three-point affine Lie algebras, Cartan subalgebra.  \\

\noindent {\em MSC 2000} 17B67, 11E72, 14L30, 14E20.
\end{abstract}
%{\small
%\tableofcontents }

%{\small \tableofcontents }
%\vskip14mm
\maketitle

\section{Introduction}

Let $\gg$ be a finite dimensional simple complex Lie algebra, and let $R= \Bbb{C}[t^{\pm 1}].$ The complex Lie algebras $\gg \otimes_{\Bbb{C}} R$ are at the heart of the untwisted affine Kac-Moody Lie algebras. There are, however, other (twisted) affine algebras. They can be realized in term of loop algebras. Relevant to us is that these loop algebras are precisely the Lie algebras over $R$ that are locally for the \'etale topology of $R$ isomorphic to $\gg \otimes_{\Bbb{C}} R$.

The ring $R$ is the ring of functions of the variety $\mathbb{P}^1_{\Bbb{C}} \setminus\{0,1\}.$ The ring $R'$ of functions of $\mathbb{P}^1_{\Bbb{C}} \setminus\{0,1, \infty\}$ has also led to fascinating class of Lie algebras 
$\gg\otimes_{\Bbb{C}} R'$; the so called three-point algebras.  
Interesting connections between three-point algebras and the Onsager algebra, which was used in the resolution of the Ising model \cite{HT},  have been described in \cite{BT}. Further references and generalizations can be found in \cite{EF} and \cite{PS}. 

If we think of the three-point algebras $\gg \otimes_{\Bbb{C}} R'$ as the analogues of the 
untwisted affine Kac-Moody Lie algebras, it is inevitable to ask if there are three-point 
analogues of the twisted affine Lie algebras. We provide a positive (and natural) answer 
to this question. 

%The resulting family of algebras is related to Grothendieck's dessins d'enfants. 
%We also study the question conjugacy of Cartan subalgebras of three-point algebras, which is of
%importance for their (future) representation theory.

The main aim of the paper is two-fold. First, we 
%give a complete classification of 
classify $\mathbb{C}$-isomorphism classes of all twisted forms  of three-point algebras. 
The resulting family of algebras is related to 
Grothendieck's dessins d'enfants.
For the details of the classification we refer
to Section 7. 
%As already mentioned,  the (twisted) three-point algebras algebras are related 
%to Grothendieck's dessins d'enfants. 

Second, we also study the (natural) question 
of conjugacy of Cartan subalgebras of three-point algebras, which is of 
importance for their (future) representation theory. In Section $8$ 
we give a cohomological description of conjugacy classes of the so called
generically maximal split tori in reductive group schemes over arbitrary base schemes 
and in Section $9$ we apply this result to conjugacy of Cartan subalgebras in twisted 
forms of three-point algebras. The main ingredient of our considerations in this part 
is the correspondence between maximal split tori of a simple group scheme  and maximal 
diagonalizable  Lie subalgebras of its Lie algebra 
given in our paper \cite[Theorem 7.1]{CGP}.

By looking at the analogue situation in the affine Kac-Moody case, we expect our work to be relevant to possible generalizations of the Wess-Zumino-Witten models and the theory of open strings (where the twisted algebras enter into the picture). Other connections to Physics as well as relevant references can be found in [Sch].
\medskip

\noindent {\bf Acknowledgement}: The  authors want to express their sincere gratitude to the referee for his/her valuable comments.

\section{Notation and conventions}
Throughout $k$ will denote an algebraically closed field of 
characteristic $0,$ and $\gg$
a finite dimensional semisimple Lie algebra over $k.$

We fix a (necessarily split) Cartan subalgebra $\gh$ of $\gg,$ and 
a base $\Delta
$ of the root system $\Phi = \Phi(\gg,\gh).$  The simply connected
(resp. adjoint) Chevalley-Demazure group over $k$ corresponding to
$\gg$ will be denoted by $\bG^{\rm sc}$  (resp. $\bG^{\ad}).$  We then
have the exact sequence of algebraic $k$-groups
$$
%\begin{equation}\label{interact}
1\to \bmu \to \bG^{\rm sc} \os{\Ad}\longrightarrow \bG^{\ad} \to 1
%\end{equation}
$$
where $\bmu = Z(\bG^{\rm sc})$ is the centre of $\bG^{\rm sc}.$  The split
torus of $\bG^{\rm sc}$ (resp. $\bG^{\ad})$ corresponding to $\gh$ will
be denoted by $\bT^{\rm sc}$ (resp. $\bT^{\ad}).$ As usual, we denote the 
$k$-group of automorphisms of $\bG$ by $\bAut(\bG)$.

If $\bAut(\gg)$ denotes the (linear algebraic) $k$-group of
automorphisms of $\gg,$ then $\bAut(\gg)^0 \simeq \bG^{\ad}$ via
the adjoint representation. We have a split exact sequence
\begin{equation}\label{autoexact}
1 \to \bG^{\ad} \to \,\bAut(\gg)  \to \bOut
(\gg)\to 1
\end{equation}
of algebraic $k$-groups. Here $\bOut(\gg)$ is the finite
constant $k$-group corresponding to the finite (abstract) group
 of symmetries of the Dynkin diagram of $\gg.$

 In what follows we will denote the abstract groups $\bAut(\gg)(k)$ and 
 $\bOut(\gg)(k)$ by $\Aut_k(\gg)$ and $\Out_k(\gg)$ respectively.
 
 If $\X$ is a $k$-scheme and $\mathfrak{G}$ a group scheme over $\X$, we will denote 
 throughout $H^1_{fppf}(\X, \mathfrak{G})$ by $H^1(\X, \mathfrak{G}).$ Note that 
 if $\mathfrak{G}$ is smooth then $H^1_{fppf}(\X, \mathfrak{G}) = H^1_{\et}(\X, \mathfrak{G})$ 
 where $H^1_{\et}$ stands for \'etale non-abelian cohomology.

 % We
%fix a section $s:\,\bOut\,(\gg) \to\,\bAut\,(\gg)$ once and for
%all in such a way that the image $s(\bOut\,(\gg))$ is contained in 
%$\bAut\, (\gg,\gb,\gh)$ where $\bAut\, (\gg,\gb,\gh)$ is the subgroup 
%of $\bAut\, (\gg)$ that
%stabilizes both $\gh$ and the Borel subalgebra $\gb$ corresponding to
%$\Delta.$ Thus we have a split exact sequence
%$$
%\begin{equation}\label{splitsection}
%1 \to \bT^{\ad} \to \,\bAut(\gg,\gb,\gh) \to \,\bOut(\gg) \to
%1.
%\end{equation}
%where $\bAut\, (\gg,\gb,\gh)$ is the subgroup of $\bAut\, (\gg)$ that
%stabilizes both $\gh$ and the Borel subalgebra $\gb$ corresponding to
%$\Delta.$ 
%$$
%We have an exactly analogous picture if we replace $\gg$ by $\bG$ (see \S3 
%for details and references).

%Let $R = k[t^{\pm 1}].$  For each $\ba =\{a_1,\dots,a_n\}\subset
%$k^\times$ we let\footnote{{\bf V: it seems to me we never considered
%the ring $R_{\ba}$ with $n>1$.}}
%$$
%R_{\ba} = R_{(t-a_1)\dots (t-a_n)},
%$$
%denote the ring of fractions of $R$ by the multiplicative set
%generated by the $t-a_i.$  Geometrically, $\Spec (R_{\ba})$
%corresponds to removing the points $0,a_1,\dots a_n$ (resp.
%$0, a_1,\dots, a_n, \infty  )$ from $\A^1_k$ (resp. $\mathbb{P}^1_k).$
%For convenience when $n=1,$ i.e. $\ba =\{a\}$ we write $R_a$ instead
%of $R_{\ba}.$ We assume in what follows without lose of generality that 
%$a = 1$ and 
%denote $R_{\{1\}}$ by $R'.$

\section{Application of a result  by Harder}

\begin{proposition} \label{vanish}
 Let $C$ %=\Spec(R)$ 
 be a smooth connected affine curve over $k$ and let $\mathfrak{G}$  
be a  semisimple
group scheme over $C.$
% such that $\mathfrak{G} \times_k k(C)$ is quasi-split. 
Then $H^1(C,\mathfrak{G})=1$.
\end{proposition}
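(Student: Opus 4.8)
The plan is to reduce the statement to the case of a simply connected group, where it follows from the theorem of Harder referred to in the title of this section, and then to recover the general semisimple case by a central isogeny argument together with a cohomological dimension bound. Since all the group schemes in sight are smooth (and in characteristic $0$ even the kernel below is smooth), I may work throughout with \'etale cohomology, as noted in Section 2.

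First I would pass to the simply connected central cover. For the semisimple group scheme $\mathfrak{G}$ over $C$ there is a simply connected semisimple group scheme $\mathfrak{G}^{\rm sc}$ together with a central isogeny $\mathfrak{G}^{\rm sc}\to\mathfrak{G}$ whose kernel $\bmu$ is a finite group scheme of multiplicative type (the fundamental group of $\mathfrak{G}$, contained in the centre of $\mathfrak{G}^{\rm sc}$); because $k$ has characteristic $0$, this $\bmu$ is finite \'etale, hence a torsion \'etale sheaf on $C$. This gives a central extension $1\to\bmu\to\mathfrak{G}^{\rm sc}\to\mathfrak{G}\to 1$ of group schemes over $C$. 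Next I would invoke Harder's theorem for the simply connected group: since $C$ is a smooth connected affine curve over the algebraically closed field $k$ and $\mathfrak{G}^{\rm sc}$ is semisimple simply connected, $H^1(C,\mathfrak{G}^{\rm sc})=1$. (This is the ``result of Harder''; at the level of the function field $k(C)$ it rests on the fact that $k(C)$ has cohomological dimension $\le 1$, so that simply connected groups have trivial $H^1$ there, and one descends from the generic point to all of $C$.)

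Finally I would run the connecting-map argument attached to the central extension. The associated sequence of pointed sets
\[
H^1(C,\mathfrak{G}^{\rm sc})\longrightarrow H^1(C,\mathfrak{G})\os{\delta}\longrightarrow H^2(C,\bmu)
\]
is exact at the middle term, so the fibre $\delta^{-1}(\ast)$ over the distinguished point coincides with the image of $H^1(C,\mathfrak{G}^{\rm sc})$. Because $\bmu$ is a torsion \'etale sheaf and $C$ is affine of dimension $1$ over an algebraically closed field, Artin's vanishing theorem gives $\mathrm{cd}(C)\le 1$, whence $H^2(C,\bmu)=0$. Therefore $\delta$ is the trivial map, every class in $H^1(C,\mathfrak{G})$ lifts to $H^1(C,\mathfrak{G}^{\rm sc})$, and since the latter set is a single point we conclude $H^1(C,\mathfrak{G})=1$.

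The genuinely substantial input is Harder's vanishing for the simply connected cover; the remaining steps are formal once one knows that $\bmu$ is \'etale (immediate in characteristic $0$) and that $H^2(C,\bmu)$ vanishes (Artin's affine bound). The one point that must be handled with care is the exactness of the non-abelian cohomology sequence at $H^1(C,\mathfrak{G})$ — that is, that triviality of the connecting map $\delta$ really forces every $\mathfrak{G}$-torsor to descend from a $\mathfrak{G}^{\rm sc}$-torsor — which is the standard twisting argument for a \emph{central} extension and relies on $\bmu$ lying in the centre of $\mathfrak{G}^{\rm sc}$, as it does here.
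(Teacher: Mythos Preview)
Your argument is correct and matches the paper's proof almost verbatim: pass to the simply connected cover, invoke Harder's vanishing for $\mathfrak{G}^{\rm sc}$, and then use the exact sequence $H^1(C,\mathfrak{G}^{\rm sc})\to H^1(C,\mathfrak{G})\to H^2(C,\bmu)$ together with $\mathrm{cd}(C)\le 1$ to conclude. The only cosmetic difference is that the paper is explicit about \emph{why} Harder's theorem applies to $\mathfrak{G}^{\rm sc}$: it cites Steinberg's theorem to show $\mathfrak{G}^{\rm sc}_{k(C)}$ is quasi-split (Harder's ``rationally quasi-trivial'' hypothesis), whereas you phrase the same input as vanishing of $H^1$ over the function field---both are consequences of Steinberg's theorem over a field of cohomological dimension~$\le 1$.
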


\begin{proof}
Assume first that $\mathfrak{G}$ is simply connected. By Steinberg's theorem 
(see \cite[Theorem 11.2]{St} and also \cite{Se} Ch. III \S2, Theorem 1') we  know that 
%all $\mathfrak{G}$-torsors over $C$ are rationally trivial and that 
$$\mathfrak{G}_{k(C)} = 
\mathfrak{G} \times_C \Spec\big(k(C)\big)$$ 
has a Borel subgroup (i.e., it is quasi-split). In Harder's \cite{H1}  terminology, 
this last means that $\mathfrak{G}$ is {\it rationally quasi-trivial}. By \cite[Satz 3.3]{H1} we then have  $H^1(C, \mathfrak{G})=1.$

For the general case, we consider the simply connected covering 
$f: \widetilde{\mathfrak{G}} \to \mathfrak{G}$.
Its kernel $\bmu$ is a finite \'etale $C$-group scheme and is central in 
$\widetilde{\mathfrak{G}}$.
We consider the following exact sequence  of pointed sets \cite[IV.4.2.10]{Gi}
$$
 H^1(C,\widetilde{\mathfrak{G}}) \to H^1(C,\mathfrak{G}) \to H^2(C, \bmu).
$$
But as we have seen $H^1(C,\widetilde{\mathfrak{G}}) =1$, while
  $H^2(C,\bmu)=0$ by \cite[Theorem 15.1]{M2} given that 
${\rm cd}(C)\leq {\rm dim}(C)=1$.
It follows  that
$H^1(C,\mathfrak{G})=1$.
\end{proof}

\begin{remark} If $C={\mathbb A}^1_k$ or $\GG_{m,k}$, we have furthermore  
$H^1(C,\mathfrak{G})=1$ for each reductive group $C$-scheme $\mathfrak{G}$ \cite{P1}.
  \end{remark}

\begin{corollary}\label{corvanish}
 Let $C=\Spec(R)$ be a smooth connected affine curve over $k.$  Let 
$\bG$ be a semisimple split $k$-group scheme. Then 

(i) The map $p:\bAut(\bG) \to \bOut(\bG)$ induces
a bijection of pointed sets
$$
p_*:H^1\big(C, \bAut(\bG) \big) \simlgr H^1\big(C, \bOut(\bG) \big).
$$
Similar considerations apply if we replace $\bG$ by $\gg.$ 

(ii) All $C$--forms of $\bG_C = \bG \times_k \Spec(R)$  are 
quasi-split. In particular, every semisimple group scheme over $C$ is quasi-split. \end{corollary}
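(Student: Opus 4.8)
The plan is to exploit the split exact sequence of $k$-groups
$$1 \to \bG^{\ad} \to \bAut(\bG) \os{p}\longrightarrow \bOut(\bG) \to 1,$$
the analogue of (\ref{autoexact}), together with the vanishing statement of Proposition \ref{vanish}. Since $\bOut(\bG)$ is finite constant and all the groups in play are smooth, every cohomology set below is \'etale. Surjectivity of $p_*$ is immediate: a pinning of $\bG$ determines a section $s\colon \bOut(\bG) \to \bAut(\bG)$ of $p$, so that $s_*\colon H^1(C,\bOut(\bG)) \to H^1(C,\bAut(\bG))$ satisfies $p_* \circ s_* = \id$, whence $p_*$ is onto.

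For injectivity I would argue by the standard twisting technique for non-abelian cohomology. Let $\beta, \beta' \in H^1(C,\bAut(\bG))$ satisfy $p_*(\beta)=p_*(\beta')$, and choose a cocycle $z$ representing $\beta$. Twisting the exact sequence by $z$ gives
$$1 \to {}_z\bG^{\ad} \to {}_z\bAut(\bG) \to {}_z\bOut(\bG) \to 1,$$
and under the twisting bijection $H^1(C,{}_z\bAut(\bG)) \simlgr H^1(C,\bAut(\bG))$, which carries the base point to $\beta$, the fibre of $p_*$ over $p_*(\beta)$ is identified with the image of $H^1(C,{}_z\bG^{\ad})$. Now ${}_z\bG^{\ad}$ is an inner twist of $\bG^{\ad}_C$, hence again a semisimple group scheme over $C$; so Proposition \ref{vanish} yields $H^1(C,{}_z\bG^{\ad})=1$. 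Therefore every fibre of $p_*$ is a singleton and $p_*$ is injective, proving (i). The case of $\gg$ is identical, using (\ref{autoexact}) and the identification $\bAut(\gg)^0 \simeq \bG^{\ad}$: the twists of $\bG^{\ad}$ that occur are again semisimple, so the same vanishing applies.

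For (ii), recall that the $C$-forms of $\bG_C$ are classified by $H^1(C,\bAut(\bG))$. By (i) the section $s_*$ is the two-sided inverse of $p_*$, so every such form is isomorphic to one obtained by twisting $\bG_C$ by a cocycle with values in $s(\bOut(\bG))$, that is, in the group of automorphisms preserving the chosen pinning. Such automorphisms stabilize the Borel subgroup $\bB$ and the torus $\bT$, so $\bB_C$ descends to a Borel subgroup scheme of the twisted form; hence every $C$-form of $\bG_C$ is quasi-split. For the final assertion, any semisimple group scheme over the connected base $C$ is \'etale-locally split (SGA3), hence is a $C$-form of the split group $\bG_C$ of its (locally constant) type, and so is quasi-split by what precedes.

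The main obstacle is the injectivity in (i): one must recognize that, after twisting, the relevant fibres are governed by $H^1$ of an \emph{inner} form of $\bG^{\ad}$, and that this inner form is still a semisimple $C$-group scheme, so that Proposition \ref{vanish} applies. Once this is in place both parts follow formally; the only extra input in (ii) is that the section $s$ has image in the pinning-preserving, hence Borel-stabilizing, automorphisms, which is precisely what forces the resulting forms to be quasi-split.
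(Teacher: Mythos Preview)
Your argument is correct and follows essentially the same route as the paper: exploit the split exact sequence, use the pinning-induced section for surjectivity, and for injectivity apply Proposition~\ref{vanish} to the twisted adjoint group to kill each fibre; part (ii) is handled identically via the fact that the section lands in the Borel-stabilizing automorphisms. One small terminological slip: ${}_z\bG^{\ad}$ is in general not an \emph{inner} twist of $\bG^{\ad}_C$ (the cocycle $z$ may have nontrivial image in $\bOut(\bG)$), but it is an \'etale $C$-form of $\bG^{\ad}_C$ and hence still semisimple, which is all that Proposition~\ref{vanish} requires.
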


\begin{proof} We have the split exact sequence of $k$ 
(or $C,$ by base change)--groups
\begin{equation}\label{autoexactgroups}
1\longrightarrow \bG^{\ad}\,\os{\text{\rm int}}
\longrightarrow \,\bAut(\bG) \overset{p}\longrightarrow
\,\bOut(\bG)\longrightarrow 1.
\end{equation}
We consider  a pinning (\'epinglage) of  $\bG$, this 
includes the choice
of  a Killing couple $(\bB,\bT)$ of $\bG$  and  defines a section
$$
h: \bOut(\bG) \to \bAut(\bG,\bB,\bT) \subset \bAut(\bG)
$$ 
(see \cite[XXIV.1]{SGA3}).

Let $\bF$ be a representative of a  class of  $H^1\big(C, \bOut(\bG)\big)$.
Consider $\bG^{\rm ad}_{h_*\bF},$ namely the semisimple $C$--group 
obtained by twisting 
$\bG^{\rm ad}$ by the $\bAut(\bG)$-torsor $h_{*}\bF.$ It is well known (see \cite{DG} II.4.5.1) that the 
classes in $H^1\big(C, \bAut(\bG)\big)$ which map to the class of $\bF$ under 
the map 
$$
p_*:H^1\big(C,\bAut(\bG)\big)\to H^1\big(C,\bOut(\bG)\big)
$$
are in one-to-one correspondence with elements in the image of
$$
H^1(C, \bG^{ad}_{h_*\bF}) \to H^1\big(C, \bAut(\bG^{ad}_{h_*\bF})\big).
$$ 
But $H^1(C, \bG^{ad}_{h_*\bF})$ vanishes by Proposition~\ref{vanish}. 
This shows that $p_*$
%the map 
%$$
%H^1(C, \bAut(\bG) ) \to H^1(C, \bOut(\bG))  .
%$$
is injective. It is surjective because (\ref{autoexactgroups}) is split. Similar considerations apply to $\gg.$ 
Indeed if $\bG$ is of simply connected type, then $
\bAut(\bG) \simeq \bAut(\gg) $ by \cite[Exp XXIV \S 7.3]{SGA3}. This finishes the proof of (i).

(ii) For a twisted form of $\bG_C$ to be quasi-split, it suffices that the corresponding adjoint group 
be quasi-split.  That 
$\bG^{\rm ad}_{h_*\bF}$ 
is quasi-split can be explicitly seen from the fact that $h_*\bF$ stabilizes $\bB$ and $\bT$, 
so that (ii) follows from 
(i) (or, alternatively, see \cite[Satz 3.1]{H1}).
%Since $C$ is regular of dimension $1$ that all groups are quasisplit follows 
%by the reasoning given in \cite{P1}.
\end{proof}

\section{Affine Kac-Moody analogues of the three-point algebras}

In what follows $\gg$ denotes a simple Lie algebra over $k$. 
Kac-Moody algebras are defined by generators and relations (a l\`a
Chevalley-Serre). In the affine case, which is the most interesting outside
the classical finite dimensional setup, the algebras can be
``realized'' in terms of loop algebras (a construction that we will 
describe shortly). Let  $R = k[t^{\pm 1}]$ and $ R' = k[t^{\pm 1}]_{(t-1)}.$

The simplest case is that of the untwisted affine algebras. These
correspond to the $k$-algebras $\gg\otimes _k  R.$  The elements of
$\gg\otimes_k  R$ can be thought as morphisms (in the algebraic sense) of
$\A^1_k \setminus  \{0\}\to \A^{\dim_k(\gg)}_k$ which has a natural appeal
to many current constructions in Physics.  The same is true if we
replace $\A^1_k\setminus\{0\}=\mathbb{P}^1_k\setminus\{0,\infty  \}$ by 
$\mathbb{P}^1_k\setminus\{0, 1,\infty  \}.$  The resulting
$k$-algebras $\gg\otimes_k  R'$ are called three-point Lie algebras in the literature (see \cite{B} and also \cite{BT} for references). This raises an inevitable question:  What are the analogues, in the
case of $R'$, of the affine Kac-Moody Lie algebras which are
not of the form $\gg\otimes_k R?$ Our point of view is that the affine Kac-Moody algebras  can be thought as twisted forms of some
$\gg\otimes_k R$ in the \'etale topology of $\Spec\,(R)$ as we will explain in Theorem \ref{affineKM} below.\footnote{The term affine Kac-Moody Lie algebra is being used here to denote the corresponding loop algebra. More precisely. Let $\hat{\mathcal{L}}$ be an affine Kac-Moody Lie algebra in the sense of \cite{Kac}, and let $\mathcal{L} = [\hat{\mathcal{L}}, \hat{\mathcal{L}}]/\frak{z}$ where $\frak{z}$ is the centre of the derived algebra $[\hat{\mathcal{L}}, \hat{\mathcal{L}}]$ of $\hat{\mathcal{L}}$. It is well-known that $\mathcal{L}$ is a loop algebra. These are the algebras that we will be considering. In what follows we refer to $\CL$ as the ``derived algebra of $\hat{\CL}$ modulo its centre". }

We now review the realization of the affine algebras as loop
algebras.  

Fix once and for all a set $(\zeta  _n)_{n>0}$ of compatible
primitive $n$-roots of unity (i.e. $\zeta  ^h_{nh}=\zeta  _n).$
Having done this, the ingredient for defining loop algebras
based on $\gg$ is a  finite order  automorphism $\sigma$  of $\gg.$  For
each $i \in \mathbb{Z}$ consider the  eigenspace
$$
\gg _i :=\{x\in \gg:\,\sigma (x) =\zeta  ^{i}_mx\}
$$
where $m$ is a (fixed) period of  $\sigma$ i.e.
$\sigma  ^m = 1$.
Let
$$
R_m = k[t^{\pm \frac{1}{m}}], \; \text{\rm and}\;
R_{\infty  } =\us{{\stackrel{\longrightarrow}{m}}}\lim\; R_m.
$$
The loop algebra $L(\gg,\sigma)$ is then defined as follows
$$
L(\gg,\sigma) = \bigoplus \gg_i\otimes t^{\frac{i}{m}}
 \subset \gg\otimes_k R_m \subset \gg \otimes_k
R_{\infty  }.
$$

\vs\noi
\begin{remark}
$L(\gg,\sigma)$ is a Lie subalgebra of $\gg\otimes_k R_{\infty  }$
which does not depend on the choice of period $m$ of  $\sigma.$ Besides being an (infinite dimensional) Lie algebra
over $k,$ the loop algebra $L(\gg,\sigma)$ is also naturally a
Lie algebra over $R = k[t^{\pm 1}].$

These are the loop algebras appearing in affine Kac-Moody
theory. To connect them to non-abelian cohomology the crucial observation, which
 is easy to verify, is that
\begin{equation}\label{split}
L(\gg,\sigma)\otimes_{R} R_m \simeq \gg \otimes_k R_m \simeq (\gg \otimes_k R)\otimes_R R_m
\end{equation}
where the isomorphism is of $R_m$--Lie algebras. Thus loop algebras 
are a particular 
kind of twisted forms of the $R$-Lie algebra $\gg \otimes_k R.$ Their 
isomorphism 
classes correspond to 
a subset $H^1_{loop}\big(R,\bAut (\gg)\big)$ of $H^1
\big(R,\bAut (\gg)\big)$ 
where this last $H^1$ classifies {\it all} twisted forms of 
$\gg \otimes_k R.$ We have 
$H^1_{loop}\big(R,\bAut (\gg)\big)=H^1
\big(R,\bAut (\gg)\big)$,  namely  all
twisted forms are loop algebras. This follows  from \cite{P1}, but a more explicit result dealing with loop algebras as
Lie algebras over $k$ will given  in Theorem~\ref{affineKM} below by exploiting the fact that the centroid
of any twisted form of $\gg\otimes_k R$ is isomorphic to $R$ (see \cite{GP} 
Lemma 4.6.3\,\footnote{That $\gg$ is simple is essential. If $\gg$ is semisimple but not simple, 
then the centroid of $\CL$ is not isomorphic to $R.$}). It worth pointing out that for Laurent polynomial rings in more than one variable   $H^1_{loop}\big(R,\bAut (\gg)\big)$ does not necessarily equal $H^1
\big(R,\bAut (\gg)\big)$ in general. In other words. There exist twisted forms of $\gg \otimes_k R$ which are not multiloop algebras based on $\gg.$ See \cite{GP1} for details. 
\end{remark}

Let $\CL$ be a Lie algebra over $k.$ Recall that the centroid $C(\CL)$ of 
$\CL$ is defined by
$$
C(\CL) = \{\theta  \in \,\End_k\,(\CL):\theta  [x,y] =[x,\theta
(y)]=[\theta  (x),y]\;\text{\rm for all}\; x,y\in \CL\}.
$$
If $\CL$ is perfect, then $C(\CL)$ is commutative and we can view
$\CL$ naturally as a Lie algebra over the (commutative and unital)
ring $C(\CL).$

\begin{theorem}\label{affineKM}
For a Lie algebra $\CL$ over $k$ the following conditions are
equivalent.

\begin{description}
\item  [{\rm (i)}] $\CL$ is the derived algebra modulo its centre of an
affine Kac-Moody Lie algebra.
\item  [{\rm (ii)}] $\CL\simeq L(\gg,\sigma  )$ for some $\gg$ 
(unique up to isomorphism) and
some $\sigma  \in \,\text{\rm Aut}_k(\gg)$ of finite order.
\item [{{\rm (ii)(bis)}}] As in {\rm (ii)} but with $\sigma  \in
\text{\rm Out}_k(\gg) \subset \,\text{\rm Aut}_k(\gg).$
\item  [{\rm (iii)}] $C(\CL)\simeq R,$ and there exists an \'etale
extension $S$ of $C(\CL)$ such that $$\CL \otimes _{C(\CL)} S\simeq
\gg\otimes_k S.$$
\item  [{{\rm (iii)(bis)}}] As in {\rm (iii)} but with $S$ Galois.
\end{description}
\end{theorem}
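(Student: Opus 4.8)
The plan is to prove the cyclic chain of implications (i) $\Rightarrow$ (ii) $\Rightarrow$ (ii)(bis) $\Rightarrow$ (iii)(bis) $\Rightarrow$ (iii) $\Rightarrow$ (i), which organizes the equivalence economically. The implication (i) $\Rightarrow$ (ii) is the classical realization theorem of Kac: every affine Kac-Moody algebra arises, after passing to the derived algebra modulo its centre, as a loop algebra $L(\gg,\sigma)$ for a simple $\gg$ and a finite-order automorphism $\sigma$; the uniqueness of $\gg$ up to isomorphism follows from the fact that $\gg$ is recovered as $\CL \otimes_{C(\CL)} \kappa$ for an appropriate residue field of the centroid $C(\CL)\simeq R$. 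The implication (ii)(bis) $\Rightarrow$ (ii) is trivial since $\Out_k(\gg) \subset \Aut_k(\gg)$, so the substance of the top of the chain is (ii) $\Rightarrow$ (ii)(bis): given $L(\gg,\sigma)$ for arbitrary finite-order $\sigma$, I must produce an isomorphic loop algebra where the twisting automorphism lies in $\Out_k(\gg)$. This is precisely where Corollary~\ref{corvanish}(i) enters.

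First I would translate the problem into non-abelian cohomology over $R = k[t^{\pm 1}]$. By the isomorphism (\ref{split}), each $L(\gg,\sigma)$ is a twisted form of $\gg\otimes_k R$ and hence defines a class in $H^1\big(R,\bAut(\gg)\big)$; two loop algebras are isomorphic as $R$-Lie algebras exactly when they define the same class (the centroid argument of \cite{GP} Lemma 4.6.3 guarantees that a $k$-isomorphism is automatically compatible with the $R$-structure up to an automorphism of $R$). Now I would apply Corollary~\ref{corvanish}(i) with $C=\Spec(R)$, which gives a bijection $p_*\colon H^1\big(R,\bAut(\gg)\big) \simlgr H^1\big(R,\bOut(\gg)\big)$. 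Tracing the splitting $h$ of the sequence (\ref{autoexactgroups}) backwards, every class in $H^1\big(R,\bAut(\gg)\big)$ is represented by a cocycle landing in the constant subgroup $\bOut(\gg)$, i.e.\ by a loop algebra whose defining automorphism can be taken in $\Out_k(\gg)$. This yields (ii)(bis).

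For the bottom of the chain, (iii)(bis) $\Rightarrow$ (iii) is trivial (a Galois extension is \'etale). The implication (ii) $\Rightarrow$ (iii)(bis) is again (\ref{split}): one takes $S=R_m$, which is a Galois (Kummer) extension of $R$ with group $\Z/m\Z$, and the isomorphism $L(\gg,\sigma)\otimes_R R_m \simeq \gg\otimes_k R_m$ is the required splitting, while $C(L(\gg,\sigma))\simeq R$ holds by the simplicity of $\gg$. Finally, (iii) $\Rightarrow$ (i): the hypotheses say $\CL$ is an \'etale (indeed, by the above, loop) form of $\gg\otimes_k R$ over its centroid $R$; invoking the established fact that $H^1_{loop}\big(R,\bAut(\gg)\big)=H^1\big(R,\bAut(\gg)\big)$ (stated in the Remark, from \cite{P1}), such a form is isomorphic to some $L(\gg,\sigma)$, which is the derived-algebra-modulo-centre of an affine Kac-Moody algebra by Kac's construction, closing the cycle.

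I expect the main obstacle to be the careful bookkeeping in (ii) $\Rightarrow$ (ii)(bis): one must be sure that the cohomological reduction to $\bOut(\gg)$ provided by Corollary~\ref{corvanish}(i) can be lifted back to an honest \emph{loop} construction with the period $m$ and the roots of unity $(\zeta_n)$ chosen consistently, rather than merely an abstract twisted form. Concretely, the subtlety is that the bijection $p_*$ is a statement about isomorphism classes of torsors, whereas (ii) and (ii)(bis) are statements about explicit loop algebras; reconciling the two requires knowing that a class in the image of $h_*$ corresponds to a loop algebra built from a representative automorphism in $\Out_k(\gg)$, which in turn rests on the structure of $\pi_1$ of $\Spec(R)$ and the identification of loop classes with continuous cocycles of the geometric fundamental group $\widehat{\Z}$. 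I would handle this by appealing to the loop-reformulation of \cite{P1}/\cite{GP} that identifies $H^1_{loop}$ with such cocycles, under which the passage to an outer representative is exactly the statement that every finite-order automorphism is, up to the loop isomorphism, conjugate into a diagram automorphism times an inner part absorbed by the connectedness of $\bG^{\ad}$.
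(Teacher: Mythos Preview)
Your proposal is correct and essentially matches the paper's argument, with one organizational difference worth noting. The paper obtains the equivalence of (i), (ii), and (ii)(bis) \emph{directly} from Kac's realization theorem in \cite[Ch.~8]{Kac}, which already produces the loop realization with $\sigma$ a diagram automorphism; it then reserves Corollary~\ref{corvanish} for the passage back from (iii) to the loop description. You instead invoke Corollary~\ref{corvanish} for (ii) $\Rightarrow$ (ii)(bis), and close the cycle (iii) $\Rightarrow$ (i) by quoting the equality $H^1_{loop}=H^1$ from \cite{P1} (stated in the Remark preceding the theorem). Both routes are valid and rest on the same ingredients: yours makes the cohomological mechanism explicit throughout, at the cost of re-deriving via torsors a reduction to outer automorphisms that Kac's classical theory already supplies; the paper's route is terser but treats \cite{Kac} more as a black box. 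The ``main obstacle'' you flag, namely lifting the cohomological reduction back to an honest loop construction, is real but is exactly what the identification of $H^1(R,\bOut(\gg))$ with conjugacy classes in $\Out_k(\gg)$ via $\pi_1(R)\simeq\widehat{\Z}$ handles, and your sketch of that is adequate.
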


\begin{proof} Except for uniqueness, the equivalence of (i), (ii) and 
(ii)(bis) is the realization
theorem of the affine algebras as described in Ch. 8 of \cite{Kac}. 
Given that $C(\CL) \simeq R$, %(see \cite{GP} lemma 4.6.3), 
that (ii) implies (iii)(bis)
is easy in view of (\ref{split}) and the fact that $R_m/R$ is Galois.
%\footnote{That $\gg$ is 
%simple is essential. If $\gg$ is semisimple but not simple, then the centroid 
%of $\CL$ is not isomorphic to $R.$}  
%The rest of the equivalences and uniqueness hinges on the vanishing of
%$H^1(R,\mathfrak G)$ for any reductive group scheme $\mathfrak{G}$ over $R$ as explained in
%\cite{P1}.
The rest of the equivalences hinges on Corollary \ref{corvanish}. The uniqueness of $\gg$ and that there is no loss of information in passing from $R$-algebra isomorphisms to $k$-algebra isomorphisms is explained in \cite{P1}.
\end{proof}

We now explain how cohomology enters into the picture, and how it
looks if we replace  $R = k[t^{\pm 1}]  = k[t]_t$ by $R'   = k[t^{\pm 1}]_{t-1} = k[t]_{t(t-1)}.$  
%In other words, 
The Lie algebra
$\gg \otimes_k R'$ plays the role of the untwisted affine algebra.
The ``rest'' of the so-called three-point algebras are those whose
centroids coincide with $R',$ and which look, locally for the
\'etale topology on $\Spec(R'),$ like $\gg\otimes_k R'.$ 

\begin{definition} {\it A three-point Lie algebra is a Lie algebra $\CL$ 
over $k$ with the following two properties:

(a)  The centroid of $\CL$ is isomorphic to $R'$,

(b) There exists an \'etale extension $S'$ of $R'$ such  $\CL \otimes _{C(\CL)} S'\simeq
\gg\otimes_k S'.$}

\end{definition}

Because of Theorem \ref{affineKM}, this definition is in perfect analogy with the situation that 
one encounters in the case of affine Kac-Moody Lie algebras. 

%One knows that, as algebras over $R',$ three-point Lie algebras are then 
%classified by the
%pointed set $H^1 \big(R' ,\bAut (\gg)\big).$\footnote{This again uses  \cite{GP} Lemma 4.6.3, 
%namely that the centroid on any twisted form of $\gg \otimes_k R'$ is naturally isomorphic to 
%$R'$. The argument is simple. Using that $\gg$ is central, one first shows that the centroid of 
%$\gg \otimes_k R'$ is naturally isomorphic to $R'$. Then the case of twisted forms is handled by 
%descent considerations.} 
It follows from the definition that there exists a (natural) well-defined surjective map 
from the set of $R'$-isomorphism
classes of twisted forms of $\gg\otimes_k R'$ into the set of $k$-isomorphism classes
of three-point Lie algebras which is not necessary 
injective.\footnote{This again uses  \cite{GP} Lemma 4.6.3, 
namely that the centroid on any twisted form of $\gg \otimes_k R'$ is naturally isomorphic to 
$R'$. The argument is simple. Using that $\gg$ is central, one first shows that the centroid of 
$\gg \otimes_k R'$ is naturally isomorphic to $R'$. Then the case of twisted forms is handled by 
descent considerations.} Recall that twisted $R'$-forms of $\gg\otimes_k R'$ are classified by  
$H^1\big(R',\,\bAut (\gg)\big)$.
By
considering the split exact sequence of $R'$--groups corresponding to 
(\ref{autoexact})
and passing to cohomology we obtain
\begin{equation}
H^1(R',\bG^{\ad}) \to H^1\big(R',\,\bAut(\gg)\big) \to
H^1\big(R',\,\bOut (\gg)\big) \to 1.
\end{equation}
By Corollary \ref{corvanish} we in fact have a bijection of pointed sets
\begin{equation}
H^1\big(R',\,\bAut(\gg)\big) \to
H^1\big(R',\,\bOut (\gg)\big).
\end{equation}
Moreover by \cite{SGA1} XI \S5
\begin{equation}
H^1\big(R',\bOut(\gg)\big)\simeq
\,\Hom_{cont}\big(\pi_1(R'),\text{\rm Out}_k(\gg)\big)/\text{\rm conjugation}
\end{equation}
where $\pi_1(R')$ is the algebraic fundamental group at the
geometric point 
$$
\ol x =\,\Spec\,\ol{k(t)} \to\,\Spec\, (R')
$$
 and $\text{\rm Out}(\gg)$ acts on
$\Hom_{cont}\big(\pi_1(R'),\text{\rm Out}(\gg)\big)$ by conjugation.

\begin{remark}\label{comparison} By \cite{Sz} 4.6.12 or the Comparison 
Theorem $\pi_1(R')\simeq \widehat{\mathbb{Z}*\mathbb{Z}}.$  If
we fix generators $e_1$ and $e_2$ for the two copies of $\mathbb{Z},$ then
$H^1\big(R',\bOut_k(\gg)\big)$ can be thought as conjugacy classes
of pairs of elements of $\Out_k (\gg).$
\end{remark}

\begin{remark}\label{dessin}It is well known  that the abstract group 
${\rm Out}_k({\gg})$ is the symmetric group $S_n$ with $n = 1,2,3.$ 
Furthermore, if the pair of elements of $S_n$ assigned to $e_1 $ and $e_2$ 
generate a subgroup in $S_n$ acting transitively on $\{1,\ldots,n\}$ 
then the conjugacy class of this pair corresponds to a dessin d'enfant of 
degree $n$ and conversely (see below for details and references). As we shall see the classification of 
three-point algebras, as Lie algebras over $R'$, is entirely given 
in terms of dessins.
\end{remark}

\section{Dessins d'enfants}

In this section we briefly recall the definition of dessins d'enfants and their properties.
For more details and applications we refer to the surveys 
\cite{LZ}, \cite{Sch}, \cite{W}. 

A dessin d'enfants is a bipartite connected graph $\Gamma$ which is embedded into
an oriented closed (connected) topological surface $X$ such that it fills the surface, i.e.
$X\setminus\Gamma$ is a union of open cells. Two dessins d'enfants $(X_1,\Gamma_1)$
and $(X_2,\Gamma_2)$ are called equivalent if there exists a homeomorphism
$f:X_1\to X_2$ such that $f(\Gamma_1)=\Gamma_2$.

Such graphs appear in a nice way to describe all possible coverings $\beta:X\to 
\mathbb{P}^1(\mathbb{C})$
from a closed (connected) Riemann surface $X$ to the Riemann sphere $\mathbb{P}^1(\mathbb{C})$
which are ramified at most over the points $0,1$ and $\infty$. Such pair
$(X,\beta)$ is called a Belyi morphism. The remarkable fact, known as Belyi's theorem, is that a projective smooth connected
curve $X$ over $\mathbb{C}$ is defined over $\overline{\mathbb{Q}}$ if and only if 
there exists a finite covering $X\to \mathbb{P}^1$ which is unramified outside $0,1$ and $\infty$.

Given a Belyi pair $(X,\beta)$ we associate a bipartite graph
$\Gamma$ on $X$ as follows. We may identify $\mathbb{P}^1(\mathbb{C})$ with
$\mathbb{C}\cup \{\infty\}$. Let $I=[0,1]$ be the closed segment on the real line $\mathbb{R}$.
Then its preimage $\beta^{-1}(I)$ is a connected graph on $X$. Its vertices are the preimages 
of $0$ and $1$. We may colour all preimages of $0$ with one colour (say white)
and all preimages of $1$ with another colour (say black). One checks that $\beta^{-1}(I)$
fills $X$ and hence it is a dessin d'enfants.

It is a striking fact  that the above correspondence is one-to-one. %Namely, the following holds.
\begin{theorem}\label{dessins}
A Belyi pair $(X,\beta)$ of degree $d$
is uniquely determined up to equivalence by
\begin{itemize}
\item a dessin d'enfants with $d$ edges up to equivalence;

 \item a monodromy map $\alpha: F_2\to S_d$, i.e. a transitive action of the free group 
 $F_2$ on two generators on a $d$-element set $\{1,2,\ldots,d\}$,
 up to conjugation;
 
  \end{itemize}
 \end{theorem}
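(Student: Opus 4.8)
The plan is to prove the two stated bijections by routing both through the common description via monodromy, treating the topological/combinatorial equivalence (dessins $\leftrightarrow$ monodromy maps) and the analytic equivalence (Belyi pairs $\leftrightarrow$ monodromy maps) separately.

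First I would establish the correspondence between dessins d'enfants with $d$ edges and transitive homomorphisms $\alpha\colon F_2 \to S_d$ up to conjugacy. Given a dessin $(X,\Gamma)$, fix the orientation of $X$ and label its $d$ edges by $\{1,\dots,d\}$. The orientation induces a cyclic ordering of the edges incident to each vertex; reading these off at the white vertices yields a permutation $\sigma_0 \in S_d$ whose cycles are exactly the white vertices, and the black vertices yield $\sigma_1 \in S_d$ likewise. Since $F_2$ is free on $e_1,e_2$, the assignment $e_1 \mapsto \sigma_0$, $e_2 \mapsto \sigma_1$ defines $\alpha$, and connectedness of $\Gamma$ is equivalent to transitivity of $\langle \sigma_0,\sigma_1\rangle$. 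Conversely, from a transitive pair $(\sigma_0,\sigma_1)$ one rebuilds $\Gamma$ by taking $d$ edges and gluing them into white (resp. black) vertices according to the cycles of $\sigma_0$ (resp. $\sigma_1$); the faces are recovered as the cycles of $(\sigma_0\sigma_1)^{-1}$, and filling them in produces an oriented closed surface whose genus is pinned down by the Euler characteristic. A change of labelling conjugates $\alpha$, which matches equivalence of dessins.

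Next I would treat the correspondence between Belyi pairs and monodromy maps. Given a Belyi pair $(X,\beta)$, the restriction of $\beta$ over $\mathbb{P}^1(\mathbb{C})\setminus\{0,1,\infty\}$ is a finite unramified covering of degree $d$. Since $\pi_1\big(\mathbb{P}^1(\mathbb{C})\setminus\{0,1,\infty\}\big)\simeq F_2$, the monodromy action on the fibre over a fixed base point gives $\alpha\colon F_2 \to S_d$, with transitivity equivalent to connectedness of $X$, and a change of base point or labelling conjugating $\alpha$. In the converse direction, the Galois correspondence for covering spaces produces from a transitive $\alpha$ a connected unramified covering of the thrice-punctured sphere; one then fills in the punctures to obtain a ramified covering $X \to \mathbb{P}^1(\mathbb{C})$ of a compact surface, and the Riemann existence theorem equips $X$ with the structure of a Riemann surface for which $\beta$ is holomorphic.

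The main obstacle will be the analytic step in this last converse direction: passing from a purely topological covering of the punctured sphere to a genuine Belyi morphism of Riemann surfaces. This rests on the Riemann existence theorem, which guarantees that the abstract topological cover carries a compatible complex-analytic structure, and one must check that the local monodromy around each puncture dictates the correct ramification so that the compactification is well defined. Once this is secured, verifying that the three constructions are mutually inverse is routine bookkeeping: the edge set of $\beta^{-1}([0,1])$ is canonically identified with the fibre $\beta^{-1}(\tfrac{1}{2})$, and rotation around the white and black vertices realizes precisely the two monodromy generators, so the dessin attached to $(X,\beta)$ recovers the same conjugacy class of $\alpha$.
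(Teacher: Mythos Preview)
Your proposal is correct and follows the standard route: set up the bijection between dessins and conjugacy classes of transitive pairs $(\sigma_0,\sigma_1)\in S_d\times S_d$ via cyclic orderings at vertices, set up the bijection between Belyi pairs and such pairs via monodromy of the unramified cover over $\mathbb{P}^1\setminus\{0,1,\infty\}$, and close the loop using the Riemann existence theorem and Fox completion to recover a compact Riemann surface from a topological cover of the thrice-punctured sphere. The only point that needs care, as you note, is the analytic step, and invoking Riemann existence is exactly the right tool; the local monodromy around each puncture has finite order, so the completion is indeed a compact Riemann surface and $\beta$ extends holomorphically.

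As for comparison with the paper: the paper does not prove this theorem at all. Its entire proof reads ``See \cite{HS}.'' So there is no approach to compare against; you have supplied a genuine (and standard) argument where the authors simply outsourced the result to the literature. Your sketch is essentially what one finds in the cited reference and in the standard treatments (e.g.\ Lando--Zvonkin or Schneps).
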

\begin{proof} See \cite{HS}. 
\end{proof}
%Recall that a ribbon graph is a graph $\Gamma$ together with a ribbon structure  
%which assigns to each vertex $v$ of $\Gamma$ a cyclic permutation of the half
%edges adjacent to $v$.

\section{Classification of three-point algebras over $R'$}

\begin{theorem}\label{R'classification}
The classification of affine three-point Lie algebras over $R' =
k[t]_{t(t-1)}$ is as follows.

\begin{description}
\item  [{\rm (i)}] If $\gg$ is of type $A_1,B_\ell\,\, (\ell \geq 2),C_\ell\,\, (\ell \geq 3),G_2,
F_4,E_7$ or $E_8$ then all three-point affine algebras are trivial
i.e. isomorphic to $\gg \otimes_k R'.$

\item  [{\rm (ii)}] If $\gg$ is of type $A_\ell$ $(\ell >1),$ $D_\ell$
$(\ell >4)$ or $E_6$ there are four isomorphism classes of three-point
affine algebras. The trivial algebra, and three ``quadratic" algebras.

%These are obtained by the
%base change
%$$
%\begin{aligned}
%k[t^{\pm 1}_1,t^{\pm 1}_2] &\to R_a\\
%t_1 &\to t\\
%t_2 &\to t-a
%\end{aligned}
%$$
%from the {\rm 2}-loop algebras $L(\gg,\sigma  _1,\sigma  _2)$ where
%$$\sigma  _1,\sigma  _2\in \{1,\pi\}.$

\item  [{\rm (iii)}] If $\gg$ is of type $D_4$ there are eleven {\rm 3}-pointed
affine algebras. The trivial algebra, three ``quadratic" algebras, and seven ``trialitarian" algebras of which four are ``cyclic cubic" and three are ``non-cyclic cubic"--algebras.
\end{description}
\end{theorem}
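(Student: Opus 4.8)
The plan is to reduce the classification to a purely group-theoretic count of conjugacy classes of pairs in $\Out_k(\gg)$, and then to carry out that count for each of the three possible values of this group. As recalled just before Remark~\ref{comparison}, the $R'$-isomorphism classes of three-point algebras are classified by $H^1\big(R',\bAut(\gg)\big)$, which by Corollary~\ref{corvanish}(i) is in bijection with $H^1\big(R',\bOut(\gg)\big)$; by \cite{SGA1} and Remark~\ref{comparison} the latter is the set of conjugacy classes of ordered pairs $(a,b)$ in $\Out_k(\gg)$ under simultaneous conjugation, since $\pi_1(R')\simeq\widehat{\mathbb{Z}*\mathbb{Z}}$ is (profinitely) free on the two generators $e_1,e_2$. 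Since $\Out_k(\gg)$ is the automorphism group of the Dynkin diagram of $\gg$, it is trivial for the types listed in (i), it is $S_2$ for the types listed in (ii), and it is $S_3$ (triality) for $D_4$. Thus (i)--(iii) reduce to counting such conjugacy classes in $S_1$, $S_2$ and $S_3$ respectively, sorted by the subgroup the pair generates.

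Cases (i) and (ii) are then immediate. For (i) the only pair is $(e,e)$, corresponding to the single trivial algebra $\gg\otimes_k R'$. For (ii), $S_2$ is abelian, so conjugation is trivial and the classes are exactly the four pairs $(e,e),(e,s),(s,e),(s,s)$ with $s$ the nontrivial element; the first is the trivial algebra and the other three are the quadratic forms, each becoming split over the quadratic extension of $R'$ cut out by the kernel of the monodromy.

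For (iii) I would first count the orbits by the Cauchy--Frobenius (Burnside) formula applied to the simultaneous conjugation action of $S_3$ on $S_3\times S_3$. The $g$-fixed set is $C(g)\times C(g)$, and the centralizers of the identity, of a transposition, and of a $3$-cycle have orders $6,2,3$, so the number of orbits is $\tfrac{1}{6}\big(6^2+3\cdot 2^2+2\cdot 3^2\big)=11$. To recover the finer labelling I would stratify the eleven classes by the isomorphism type of $\langle a,b\rangle$: the trivial subgroup gives the single class $(e,e)$; an order-two subgroup gives the three classes represented by $(\tau,e),(e,\tau),(\tau,\tau)$ for a transposition $\tau$ (the quadratic forms); the unique order-three subgroup $A_3$ gives four classes, represented by $(e,\rho),(\rho,e),(\rho,\rho),(\rho,\rho^2)$ for a $3$-cycle $\rho$ (the cyclic cubic forms); and the full group $S_3$ gives the remaining three classes, namely one class of pairs of distinct transpositions together with the two classes containing $(\tau,\rho)$ and $(\rho,\tau)$ (the non-cyclic cubic forms). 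The orbit-size tally $1+9+8+18=36$ confirms the partition $11=1+3+4+3$, which is exactly the content of (iii), the seven trialitarian algebras being those whose pair contains a $3$-cycle.

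Since the cohomological framework is supplied entirely by the earlier sections, the only real work is this enumeration, and the step requiring the most care is the stratification in the $D_4$ case: one must check that the eleven classes split cleanly according to the isomorphism type of $\langle a,b\rangle$ and attach the correct name (cyclic versus non-cyclic cubic according to whether this subgroup is $A_3$ or $S_3$), using that distinct coordinate cycle types cannot be interchanged by the diagonal action. I would also keep in mind that the count delivers $R'$-isomorphism classes via $H^1\big(R',\bAut(\gg)\big)$; the further, possibly non-injective, passage to $k$-isomorphism classes discussed after the definition of three-point algebras is not part of this statement.
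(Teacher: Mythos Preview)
Your argument is correct and follows essentially the same route as the paper: reduce via Corollary~\ref{corvanish} and Remark~\ref{comparison} to conjugacy classes of ordered pairs in $\Out_k(\gg)$, then enumerate in $S_1$, $S_2$, $S_3$. The paper lists the eleven $S_3$-classes directly and decorates each transitive one with its dessin d'enfant and an explicit \'etale cover of $R'$, whereas you first count $11$ by Burnside and then stratify by the isomorphism type of $\langle a,b\rangle$; both arrive at the same partition $1+3+4+3$.

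One small slip to fix: your closing parenthetical describes the seven trialitarian classes as ``those whose pair contains a $3$-cycle,'' but the class represented by two distinct transpositions $(r,s)$ contains no $3$-cycle. The correct characterization---which is exactly what you used in your stratification---is that $\langle a,b\rangle$ equals $A_3$ or $S_3$, i.e.\ that the monodromy acts transitively on $\{1,2,3\}$.
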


\begin{proof} By general considerations, as explained in \S4, three-point Lie algebras 
as $R'$-algebras are classified by $H^1\big(R',\,\bAut(\gg)\big)$, and as we have seen 
the natural map $H^1\big(R',\,\bAut(\gg)\big) \to
H^1\big(R',\,\bOut (\gg)\big)$ is bijective. In view of 
%Corollary \ref{corvanish} and 
Remarks \ref{comparison} and  \ref{dessin}, the classification is thus given by computing 
conjugacy classes of pairs of elements of the symmetric groups $S_n$ where $n = 1,2,3$ in 
cases (i), (ii) and (iii) respectively and, if the resulting action is transitive,  we provide 
the corresponding  dessin.

(i) It is obvious that there is only one conjugacy class whose dessin is just the interval $I$, 
corresponding to the trivial cover of the Riemann sphere.

\begin{table}[!ht]
\renewcommand{\arraystretch}{2}
%\centering
%Table I\\
%\medskip
\begin{tabular}{|c|c|c|c|c|c|c|}
\hline
\parbox{2.5cm}{\centering$S_1$ pair\\
representative}&Dessin&$n_0$&$n_1$&$n_\infty$&$g$\\
\hline
$(1,1)$&$\begin{xy}\ar@{-} (0,0)*{\circ};(10,0)*{\bullet};\end{xy}$&1&1&1&0\\
\hline
\end{tabular}
\end{table}
\noindent
Here (and everywhere below) $n_0,n_1$ and $n_{\infty}$ are the number of points over
$0,1$ and $\infty$ respectively under the corresponding cover
$\beta:X\to \mathbb{P}^1(\mathbb{C})$ and $g$ is the genus of $X$.
Note that if $\beta:X\to\mathbb{P}^1$ has degree $n$ then the genus $g$ 
of $X$ is equal to $(n-n_0-n_1-n_{\infty}+2)/2$ and hence $n_{\infty}$ is determined uniquely 
by $n_0,n_1$ and $g$.
 
(ii) Let $r = (1 2)$ be the generator of $S_2.$ There are four conjugacy 
classes of pairs of elements of $S_2$. The trivial class $(1,1)$ and 
three transitive classes with representatives and dessins given by the following table.
\begin{table}[!ht]
\renewcommand{\arraystretch}{2}
%\centering
%Table II\\
%\medskip
\begin{tabular}{|c|c|c|c|c|c|c|}
\hline
\parbox{2.5cm}{\centering$S_2$ pair\\
representative}&Dessin&$n_0$&$n_1$&$n_\infty$&$g$\\
\hline
$(1,r)$&$\begin{xy}(0,0)*{\circ}="a",(10,0)*{\bullet}="b",(20,0)*{\circ}="c",\ar@{-} "a";"b" \ar@{-} "b";"c"\end{xy}$&2&1&1&0\\
\hline
$(r,1)$&$\begin{xy}(0,0)*{\bullet}="a",(10,0)*{\circ}="b",(20,0)*{\bullet}="c",\ar@{-} "a";"b" \ar@{-} "b";"c"\end{xy}$&1&2&1&0\\
\hline
$(r,r)$&$\begin{xy}(0,0)*{\circ}="a",(10,0)*{\bullet}="b",
\ar@{-}@/^0.3pc/ "a";"b" 
\ar@{-}@/_0.3pc/ "a";"b" \end{xy}$&1&1&2&0\\
\hline
\end{tabular}
\end{table}

 Each conjugacy class in fact consists of a single element and the corresponding covers are 
 given by the \'etale extension
 $S/R'$ where $S=R'(\sqrt{t-1}),\,R'(\sqrt{t})$ and $R'(\sqrt{t(t-1)})$ respectively. 

\medskip

(iii) Let $r = (12),$ $s =(23)$ and $c = sr = (123)$. There are eleven conjugacy classes of pairs of elements of $S_3$. %represented by
The trivial class $(1,1),$  three ``quadratic" classes  $(1,r), (r,1), (r,r)$ (whose corresponding 
algebras are obtained as in (ii) above), and seven transitive classes represented by

\begin{table}[!ht]
\renewcommand{\arraystretch}{2}
%\centering
%Table III\\
%\medskip
\begin{tabular}{|c|c|c|c|c|c|c|}
\hline
\parbox{2.5cm}{\centering$S_3$ pair\\
representative}&Dessin&$n_0$&$n_1$&$n_\infty$&$g$&\parbox{2cm}{\centering Trialitarian\\
Type}\\
\hline
$(1,c)$&$\begin{xy}(0,0)*{\bullet}="a",(0,-5)*{\circ}="b",(5.2,3)*{\circ}="c",(-5.2,3)*{\circ}="d",\ar@{-} "a";"b" \ar@{-} "a";"c" \ar@{-} "a";"d"\end{xy}$&3&1&1&0&cyclic\\[3ex]
\hline
$(c,1)$&$\begin{xy}(0,0)*{\circ}="a",(0,-5)*{\bullet}="b",(5.2,3)*{\bullet}="c",(-5.2,3)*{\bullet}="d",\ar@{-} "a";"b" \ar@{-} "a";"c" \ar@{-} "a";"d"\end{xy}$&1&3&1&0&cyclic\\[3ex]
\hline
$(c,c^2)$&$\begin{xy}(0,0)*{\circ}="a",(10,0)*{\bullet}="b",\ar@{-}@/^0.4pc/ "a";"b" \ar@{-}@/_0.4pc/ "a";"b" \ar@{-} "a";"b"\end{xy}$&1&1&3&0&cyclic\\
\hline
$(c,c)$&\mbox{$\begin{xy}(0,0)*{\circ}="a",(10,0)*{\bullet}="b",\ar@{-}@/^1pc/ "a";"b" \ar@{-}@/_1pc/ "a";"b" \crv{"a"&(19,6)&"b"}\end{xy}$}&1&1&1&1&cyclic\\[1ex]
\hline
$(r,s)$&$\begin{xy}(0,0)*{\circ}="a",(10,0)*{\bullet}="b",(20,0)*{\circ}="c",(30,0)*{\bullet}="d", \ar@{-} "a";"b" \ar@{-} "b";"c" \ar@{-} "c";"d"\end{xy}$&2&2&1&0&non-cyclic\\
\hline
$(r,c)$&$\begin{xy}(0,0)*{\circ}="a",(10,0)*{\bullet}="b",(20,0)*{\circ}="c",
\ar@{-} "a";"b" \ar@{-}@/^0.4pc/ "b";"c" \ar@{-}@/_0.4pc/ "b";"c"\end{xy}$&2&1&2&0&non-cyclic\\
\hline
$(c,r)$&$\begin{xy}(0,0)*{\bullet}="a",(10,0)*{\circ}="b",(20,0)*{\bullet}="c",
\ar@{-} "a";"b" \ar@{-}@/^0.4pc/ "b";"c" \ar@{-}@/_0.4pc/ "b";"c"\end{xy}$&1&2&2&0&non-cyclic\\
\hline
\end{tabular}
\end{table}
Here the column Trialitarian Type refers to the Galois groups of the corresponding
covers $(X,\beta)$ which are isomorphic to $\mathbb{Z}/3\mathbb{Z}$ or $S_3$. For explicit descriptions of the corresponding 
covers  we refer to \cite{Z}.
\end{proof}

\section{Classification of three-point algebras over $k$}

Just as in the case of Kac-Moody Lie algebras, in infinite dimensional Lie theory one 
is interested in viewing three-point algebras as algebras 
over $k$ and not over $R'$. 
Clearly, if two three-point algebras are isomorphic over $R'$ 
then they are isomorphic as 
$k$-algebras as well.
However the converse is not true. Before proceeding to their 
classification over $k$ we first state and prove a useful criterion 
for studying the $R$ vs $k$--isomorphism question for twisted forms of $\gg \otimes_k R$ for an arbitrary $R.$

Consider then an {\it arbitrary} (commutative, associative and unital) $k$-algebra $R$ and the set of all $R$-isomorphism classes 
of forms of
the $R$-Lie algebra $\mathfrak{g}\otimes_k R.$ These are
 classified 
by the pointed set $H^1\big(R,{\bAut(\mathfrak{g})}\big)$. We have a natural action
of the abstract group $\Gamma={\rm Aut}_k(R)$ on the set of isomorphism classes of
$\bAut(\mathfrak{g})$-torsors. Namely, every element 
$\gamma\in\Gamma$ 
gives rise to 
the scheme automorphism $\gamma_*:{\rm Spec}(R)\to {\rm Spec}(R)$. 
If $\mathcal{T}\to 
{\rm Spec}(R)$ is an $\bAut(\mathfrak{g})$-torsor the base change $\gamma_*$ 
produces a scheme $^{\gamma}\mathcal{T}=\mathcal{T}\times_{\gamma_*} {\rm Spec}(R)$ which is obviously
an $\bAut(\mathfrak{g})$-torsor. Thus $\gamma$ yields a natural bijection 
$$
\gamma_*:H^1\big(R,\bAut(\mathfrak{g})\big)\to H^1\big(R,\bAut(\mathfrak{g})\big).
$$
Similarly, $\Gamma$ acts on the set of isomorphism classes of twisted $R$-forms of
 the simply connected group $\bG^{sc}$ attached to $\mathfrak{g}$ and their Lie algebras.
 
 %Similar considerations apply to the simply connected group $\bG^{sc}$ attached to $\mathfrak{g}$ by %considering the algebraic group $\bG = \bAut(\bG^{sc}).$ \footnote{AP to VC: I do not think that I captured what %you wanted to say. Please feel free to enter some text here.}

On the level of Lie algebras over $R$ the action of $\gamma\in\Gamma$ can be 
seen as follows.
Let $\mathcal{L}$ be an $R$-form of $\mathfrak{g} \otimes_k R$.
Then we define $^{\gamma}\mathcal{L}$ to be $\mathcal{L}$ (as a set) 
with the same Lie 
bracket structure, but the $R$-module structure is given by composing the automorphism $\gamma:R\to R$ with the standard action of $R$ on $\mathcal{L}$. 
Thus for all $x,y\in\mathcal{L}={^{\gamma}\mathcal{L}}$ we have
$$
[x,y]_{{^{\gamma}\mathcal{L}}}=[x,y]_{\mathcal{L}}
$$ and for all $r\in R$
we have 
$$
r\cdot_{^{\gamma}\mathcal{L}} x=\gamma(r)\cdot_{\mathcal{L}}x.
$$  
In particular, it follows that the identity mapping $f_{\gamma}:
{^{\gamma}\mathcal{L}}\to \mathcal{L}$ is an isomorphism of $k$-Lie algebras.
\begin{proposition} Let $\mathcal{L}$ and $\mathcal{L}'$ be $R$-forms
of $\mathfrak{g} \otimes_k R$. Then $\mathcal{L}$ and $\mathcal{L}'$ are isomorphic
as $k$-Lie algebras if and only if there exists $\gamma\in\Gamma={\rm Aut}_k(R)$ 
such that $\mathcal{L}'\simeq{^{\gamma}\mathcal{L}}$ over $R$.
\end{proposition}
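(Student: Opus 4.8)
The plan is to treat the two implications separately; the reverse direction is essentially formal, while the forward direction carries the real content, which is the identification of the centroid with $R$.

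For the ``if'' direction, suppose $\mathcal{L}'\simeq{}^{\gamma}\mathcal{L}$ over $R$ for some $\gamma\in\Gamma$. As already observed in the discussion preceding the statement, the identity map $f_{\gamma}:{}^{\gamma}\mathcal{L}\to\mathcal{L}$ is an isomorphism of $k$-Lie algebras, since it alters only the $R$-module structure and leaves the bracket untouched. Composing an ($R$-linear, hence in particular $k$-linear) isomorphism $\mathcal{L}'\xrightarrow{\sim}{}^{\gamma}\mathcal{L}$ with $f_{\gamma}$ then yields a $k$-Lie algebra isomorphism $\mathcal{L}'\xrightarrow{\sim}\mathcal{L}$, and nothing further is required.

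For the ``only if'' direction, let $\phi:\mathcal{L}\to\mathcal{L}'$ be a $k$-Lie algebra isomorphism. The key observation is that the centroid is functorial for $k$-isomorphisms: conjugation $\theta\mapsto\phi\circ\theta\circ\phi^{-1}$ defines a $k$-algebra isomorphism $\phi_{*}:C(\mathcal{L})\xrightarrow{\sim}C(\mathcal{L}')$. Since $\mathcal{L}$ and $\mathcal{L}'$ are $R$-forms of $\mathfrak{g}\otimes_{k}R$ with $\mathfrak{g}$ simple, the canonical structure maps $\iota:R\to C(\mathcal{L})$, $r\mapsto(x\mapsto r\cdot_{\mathcal{L}}x)$, and $\iota':R\to C(\mathcal{L}')$ are $k$-algebra isomorphisms by \cite[Lemma 4.6.3]{GP}. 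I would then set $\gamma:=\iota'^{-1}\circ\phi_{*}\circ\iota\in{\rm Aut}_k(R)=\Gamma$ as the candidate automorphism, and verify it does the job by computing how $\phi$ intertwines the two module structures. Writing the defining relation $\iota'\circ\gamma=\phi_{*}\circ\iota$ out on elements gives
$$
\phi(r\cdot_{\mathcal{L}}x)=\gamma(r)\cdot_{\mathcal{L}'}\phi(x)\qquad(r\in R,\ x\in\mathcal{L}).
$$
Replacing $r$ by $\gamma^{-1}(s)$ and applying $\phi^{-1}$, this becomes precisely the statement that $\phi^{-1}:\mathcal{L}'\to{}^{\gamma^{-1}}\mathcal{L}$ is $R$-linear, because by definition $s\cdot_{{}^{\gamma^{-1}}\mathcal{L}}z=\gamma^{-1}(s)\cdot_{\mathcal{L}}z$. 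As $\phi^{-1}$ trivially preserves the unchanged bracket, it is an isomorphism of $R$-Lie algebras $\mathcal{L}'\xrightarrow{\sim}{}^{\gamma^{-1}}\mathcal{L}$, so the element $\gamma^{-1}\in\Gamma$ realizes the desired $R$-isomorphism.

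The only genuine obstacle is the identification of the two centroids with $R$: this is exactly what forces the abstract conjugation isomorphism $\phi_{*}$ to descend to a bona fide automorphism of $R$, and it depends crucially on $\mathfrak{g}$ being simple rather than merely semisimple, as flagged in the footnote of Section 4. Once \cite[Lemma 4.6.3]{GP} is granted, the remainder is a formal manipulation of the twisting operation; the only care needed is in the bookkeeping, where the computation above naturally produces $\gamma^{-1}$ (rather than $\gamma$) as the automorphism realizing $\mathcal{L}'\simeq{}^{\gamma^{-1}}\mathcal{L}$ over $R$.
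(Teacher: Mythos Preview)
Your proof is correct and follows essentially the same approach as the paper: both hinge on the identification of the centroid with $R$ (via \cite[Lemma 4.6.3]{GP}), extract an element of $\Gamma$ by transporting the conjugation action on centroids through this identification, and then verify that the appropriate composite is $R$-linear. The only cosmetic difference is that the paper takes the conjugation map in the opposite direction (from $C(\mathcal{L}')$ to $C(\mathcal{L})$) and packages the verification of $R$-linearity via the functorial identity $C(f\circ f_\gamma)=C(f_\gamma)\circ C(f)=\gamma^{-1}\circ\gamma=1$, whereas you compute it directly; correspondingly the paper's $\gamma$ is your $\gamma^{-1}$, so the two arguments in fact land on the same element of $\Gamma$.
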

\begin{proof} This follows essentially from the fact  that the centroids
$C(\mathcal{L})$ and $C(\mathcal{L}')$ of $\mathcal{L}$ and $\mathcal{L}'$ 
are naturally isomorphic to $R$, as we now explain. For $r \in R$ consider the 
homothety $\chi_r \in \End_k(\L)$ given by $\chi_r : x \mapsto rx.$ Clearly $\chi_r \in C(\L).$ 
This gives a natural morphism of associative and commutative $k$-algebras $R \to C(\L)$ which 
is known to be an isomorphism (see 
%Lemma 3.4 of 
\cite[Lemma 3.4]{P2} and %Lemma 4.6 of  
\cite[Lemma 4.6]{GP}).  

We now turn to the proof of the Proposition. Assume that $f:\mathcal{L}\to\mathcal{L}'$ is a $k$-algebra isomorphism. It is immediate to verify that the map $C(f) : C(\L') \to \End_k(\L)$ defined by $\chi \mapsto f^{-1}\circ \chi \circ f$ is in fact a $k$-algebra isomorphism of $C(\L')$ onto $C(\L).$ Under our identification $C(\L') \simeq R \simeq C(\L)$ we thus have $\gamma := C(f)  \in {\rm Aut}_k(R).$   

The reader can easily verify from the definition that for the isomorphism $f_{\gamma}:
{^{\gamma}\mathcal{L}}\to \mathcal{L}$ above one has $C(f_\gamma) = \gamma^{-1}.$ Thus, if we consider the composition 
$$
f' : {^{\gamma}\mathcal{L}}\stackrel{f_{\gamma}}{\longrightarrow} \mathcal{L}
\stackrel{f}{\longrightarrow} \mathcal{L}'
$$
then $C(f') = 1.$ But this means that $f'$ is $R$-linear.

The converse is clear since we have already observed that $f_{\gamma}:
{^{\gamma}\mathcal{L}}\to \mathcal{L}$ is an isomorphism of $k$-Lie algebras.\end{proof}

\begin{remark}\label{Rvsk} The above Proposition has nothing to do with Lie algebras. We could replace $\gg$ by any finite dimensional central simple algebra over $k.$
\end{remark}
 
We now come back to the ring $R'$. For such a ring we have ${\rm Aut}_k(R')\simeq S_3$. Indeed,
an arbitrary $k$-automorphism $\gamma:R'\to R'$ induces an automorphism ${\rm Spec}(R')\to
{\rm Spec}(R')$ and hence an automorphism of the fraction field $k(t)$ of $R'$. On the 
other hand, by \cite[Chapter 1, Exercise 6.6]{Hart} one has ${\rm Aut}_k \,k(t)\simeq 
{\rm Aut}(\mathbb{P}^1_k)\simeq {\rm PGL}_2(k)$ and thus we obtain a mapping from ${\rm Aut}_k(R') $ 
into the permutation group $S_3$ of the points $\{0,1,\infty\}$ on $\mathbb{P}^1_k$.
Also, it is known \cite[Chapter IV, Exercise 2.2]{Hart} that any permutation of 
$\{0,1,\infty\}$
is induced by a unique automorphism of $\mathbb{P}^1_k$. This induces an 
automorphism of  $\mathbb{P}^1_k \setminus \{0,1,\infty\},$  hence  of $\Spec(R')$.

We now pass to the classification of $k$--isomorphism classes of 
three-point Lie algebras.
\begin{theorem}
The classification of three-point Lie algebras over $k$ is as follows.

\begin{description}

\item  [{\rm (i)}] If $\gg$ is of type $A_1,B_\ell\,\, \ell \geq 2,C_\ell\,\, 
\ell \geq 3,G_2,
F_4,E_7$ or $E_8$ then all three-point Lie algebras are trivial
i.e. isomorphic to $\gg \otimes_k R'.$

\item  [{\rm (ii)}] If $\gg$ is of type $A_\ell$ $\ell >1,$ $D_\ell$
$\ell >4$ or $E_6$ there are two isomorphism classes of three-point Lie algebras. The trivial algebra, and one quadratic algebra arising for the 
three quadratic algebras over $R'$. More precisely, the three non-isomorphic 
quadratic algebras over $R'$ are isomorphic as $k$--algebras.

%These are obtained by the
%base change
%$$
%\begin{aligned}
%k[t^{\pm 1}_1,t^{\pm 1}_2] &\to R_a\\
%t_1 &\to t\\
%t_2 &\to t-a
%\end{aligned}
%$$
%from the {\rm 2}-loop algebras $L(\gg,\sigma  _1,\sigma  _2)$ where
%$$\sigma  _1,\sigma  _2\in \{1,\pi\}.$

\item  [{\rm (iii)}] If $\gg$ is of type $D_4$ there are five three-point Lie algebras.

\begin{enumerate}
\item  The trivial algebra.

\item   One quadratic algebra arising from the three quadratic algebras over $R'.$

\item  One cyclic cubic algebra arising from the three cyclic cubic algebras over 
$R'$ corresponding to dessins of genus $0.$

\item  One cyclic cubic algebra corresponding to the cyclic cubic $R'$-algebra whose 
dessin is of genus $1.$

\item  One non-cyclic cubic algebra arising from the three non-cyclic cubic 
algebras over $R'.$
\end{enumerate}
\end{description}
\end{theorem}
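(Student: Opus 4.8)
The plan is to combine the $R'$-classification of Theorem~\ref{R'classification} with the preceding Proposition, which reduces the $k$-isomorphism problem to computing the orbits of $\Gamma = \Aut_k(R')$ on $H^1\big(R',\bAut(\gg)\big)$. Since we have seen that $\Gamma \simeq S_3$ acts through its permutation action on the three points $\{0,1,\infty\}$ of $\mathbb{P}^1_k$, the $k$-isomorphism classes of three-point Lie algebras are precisely the $S_3$-orbits among the $R'$-classes tabulated in Theorem~\ref{R'classification}. The whole argument thus comes down to making this $S_3$-action on the tables explicit and then reading off the orbits.

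First I would describe the action at the level of dessins. Under Theorem~\ref{dessins} an $R'$-class corresponds to a Belyi pair $(X,\beta)$, or equivalently to the conjugacy class of its triple of local monodromies $(\alpha_0,\alpha_1,\alpha_\infty)$ at $0,1,\infty$, subject to $\alpha_0\alpha_1\alpha_\infty = 1$. Twisting by $\gamma \in \Gamma$ replaces $\beta$ by $\gamma\circ\beta$: the covering surface $X$ is unchanged and the three branch fibres are permuted according to the image of $\gamma$ in $S_3$, so that $\Gamma$ permutes the monodromy triple by the corresponding permutation of $\{0,1,\infty\}$. Consequently the genus $g$ of $X$, the multiset $\{n_0,n_1,n_\infty\}$ of fibre sizes, and the monodromy group (a cyclic group $\mathbb{Z}/3$ or the full $S_3$) are all $\Gamma$-invariant.

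Granting this, the orbit count follows at once from the tables. In case (i) there is a single class, which is fixed, so there is one $k$-algebra. In case (ii) the trivial class is fixed while the three quadratic classes, having ramification multiset $\{2,1,1\}$ and differing only by which of $0,1,\infty$ is unramified, are permuted transitively into a single orbit; this gives the two $k$-classes. In case (iii) the trivial class (orbit of size $1$) and the three quadratic classes (one orbit) behave as before; the three cyclic cubic classes of genus $0$ have multiset $\{3,1,1\}$ and form one orbit; the cyclic cubic class $(c,c)$ is the unique class of genus $1$ and hence, $g$ being invariant, is fixed; and the three non-cyclic classes, of multiset $\{2,2,1\}$, form one orbit. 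Adding these orbits yields the five $k$-classes of (iii), matching the enumeration in the statement.

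The step needing the most care is the precise identification of the $\Gamma$-action on $H^1\big(R',\bOut(\gg)\big)$ with the permutation of the branch-point monodromies, because $\gamma$ determines an automorphism of $\pi_1(R')$ only up to conjugacy, and the relation $\alpha_0\alpha_1\alpha_\infty = 1$ forces transpositions of branch points to be accompanied by auxiliary conjugations or inversions. The key observation is that such adjustments change neither the ramification multiset, nor the genus, nor the cyclic/non-cyclic type, so they leave the orbit computation intact, while transitivity inside each family already follows from the action on the ordered ramification profiles. Should one prefer an entirely concrete check, the automorphisms $t \mapsto 1-t$ and $t \mapsto 1/t$ of $R'$ can be verified directly to permute the three quadratic extensions $R'(\sqrt{t}), R'(\sqrt{t-1}), R'(\sqrt{t(t-1)})$ transitively, and likewise to permute the explicit cubic covers of \cite{Z}, confirming the orbit structure above.
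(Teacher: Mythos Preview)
Your proposal is correct and follows essentially the same route as the paper: reduce $k$-isomorphism to $\Aut_k(R')\simeq S_3$-orbits on the $R'$-classes via the preceding Proposition, identify this $S_3$-action with the permutation of the ramification data $(n_0,n_1,n_\infty)$ of the associated Belyi covers, and then read the orbits directly off the tables of Theorem~\ref{R'classification}. If anything, you are slightly more explicit than the paper in isolating the $\Gamma$-invariants (genus, the multiset $\{n_0,n_1,n_\infty\}$, cyclic vs.\ non-cyclic monodromy) and in flagging the subtlety that $\gamma$ acts on $\pi_1(R')$ only up to inner automorphism; the paper handles this implicitly by passing to the base-changed Belyi pair $^\gamma X = X\times_{\gamma_*}\mathbb{P}^1$ and noting that the ramification numbers are simply permuted.
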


\begin{proof}
(i) Follows for Theorem \ref{R'classification}(i) because there exists a unique $R'$-form of
$\mathfrak{g}\otimes_k{R'}$. Indeed, in this case we have $\Out(\mathfrak{g})=1$.

(ii) and (iii): The problem is equivalent to classifying the set of $R'$-forms of
$\mathfrak{g}\otimes_k{R'}$ up to the action of $\Aut_k(R')=S_3$. Let $\mathcal{L}$
be an $R'$-form of $\mathfrak{g}\otimes_k{R'}$ and let $\gamma\in S_3$. The form
$\mathcal{L}$ corresponds to some class $[\xi]\in H^1\big(R',\bOut(\mathfrak{g})\big)$
and hence to a homomorphism $F_2\to S_d$, $d=2,3$, which we may assume to be
transitive (if it is not, we replace $S_d$ by $S_{d'}$ with $d' < d.$) 
By Theorem~\ref{dessins} this leads us to a unique Belyi pair 
$\beta:X\to \mathbb{P}^1(\mathbb{C})$.

Let $\gamma\in S_d$. The form $^{\gamma}\mathcal{L}$ corresponds to the class of the torsor  
$^{\gamma}\xi$
and hence to another Belyi pair $^{\gamma}\beta: {^{\gamma}X}\to \mathbb{P}^1(\mathbb{C})$.
This new Belyi pair $(^{\gamma}X,{^{\gamma}\beta})$ can be explicitly described as follows:
as we noticed before the permutation $\gamma$ of the set $\{0,1,\infty\}$ gives rise 
to a unique automorphism $\gamma_*:\mathbb{P}^1\to\mathbb{P}^1$ and then we can take
$^{\gamma}X$ to be equal to $X\times_{\gamma_*} \mathbb{P}^1$ and
$^{\gamma}\beta$ to be a natural projection $^{\gamma}X=X\times_{\gamma_*} 
\mathbb{P}^1\to \mathbb{P}^1$. As a consequence of this construction we conclude that
the numbers $n_0,n_1,n_{\infty}$ for the pair $(^{\gamma}X,{^{\gamma}\beta})$
are obtained from those of  the Belyi pair $(X,\beta)$ by permuting
them with the use of $\gamma$. Looking at the tables $2$ and $3$ we observe that the 
cover is fully determined by the number of preimages of 0, 1, and $\infty$. In case (ii) three 
of the twisted forms of $\mathfrak{g} \otimes_k {R'}$ presented
in table $2$ 
are in the same $S_3$-orbit, and that in case (iii) there are five $S_3$-orbits
as claimed in the theorem.
\end{proof}

%The problem is equivalent to classifying $R'$ \'etale algebras of degree 1,2,3 up to the 
%the action of $\Aut_k(R').$ Indeed both classifications over $R'$ are given by 
%the same $H^1$, 
%and we then take into account the ``centroid trick" described in Lemma 4.6 and Remark 4.7 
%of \cite{GP1}. The Theorem thus follows, indeed is equivalent, to the following.

\begin{remark} {\rm (i)}  An   \'etale extension  of  $R'$ of degree $2$ is  isomorphic,
up to $\text{\rm Aut}_k\,(R')$-base change, to either $R' \times R'$ or
$R'[\sqrt{t}]$.

\noindent {\rm (ii)}
An \'etale extension of $R'$ of degree $3$ is isomorphic,  up to
$\text{\rm Aut}_k\,(R')$-base change, to one (and only one) of the algebras of 
the following  list:

\smallskip

(a) $R' \times R' \times R'$;

\smallskip

(b) $R'[\sqrt{t}] \times R'$ (the quadratic case);

\smallskip

(c) $R'[\sqrt[3]{t}]$ (the cyclic cubic case of genus $0$);

\smallskip

(d) $R'[\sqrt[3]{t(t-1)}]$ (the cyclic cubic case of genus $1$);

\smallskip

(e) $R'[X]/(X^3+3X^2 -4t)$ (the non-cyclic cubic case of genus $0$).

\end{remark}

\begin{remark}\label{alldessins} Let $n > 0$ and consider the semisimple Lie algebra $\gg_n = \mathfrak{sl}_2 \times \cdots \times \mathfrak{sl}_2$ (n copies). Then $\bOut (\gg)(k) = S_n.$ As in \S4 we have bijections
$$
H^1\big(R',\,\bAut(\gg)\big) \to
H^1\big(R',\,\bOut (\gg)\big)
$$

and

$$
H^1\big(R',\bOut(\gg)\big)\simeq
\,\Hom_{cont}\big(\pi_1(R'),\text{\rm Out}_k(\gg)\big)/\text{\rm conjugation}.
$$
\medskip

By Theorem \ref{dessins} we obtain a correspondence between dessins d'enfants of degree 
$\leq n$ and 
isomorphisms classes of twisted forms of the $R'$-Lie algebra $\gg_n \otimes_k R'.$ Note, however, 
that the methods of this section cannot be applied to the classification of these algebras over $k$. 
The reason is that for $n > 1$, the natural map from $R'$ to the centroid of $\gg_n$ is not an isomorphism.

We use this opportunity to clarify a misunderstanding regarding loop algebras. Assume $\gg$ is semisimple and that $\sigma$ is an automorphism of finite order of $\gg$. It is not true that the isomorphism class of the loop algebra $L(\gg, \sigma),$ as a Lie algebra over $k,$ depends only on the ``outer part" of $\sigma.$ The assertion is true up to $R$-isomorphism classes as shown in \cite{P1}, but not up to $k$-isomorphism.

Here is a concrete example. Let $\sigma$ be the automorphism of $\gg_2$ that switches the two copies of $\mathfrak{sl}_2$. The outer part of $\sigma$ is the diagram automorphism that switches the two disjoint nodes of the corresponding Coxeter-Dynkin diagram. The Lie algebras $L(\gg_2, {\rm id})$ and $L(\gg_2, \sigma)$ are not isomorphic as $R$-Lie algebras, but are isomorphic as $k$-Lie algebras.

\end{remark}

\section{Cohomological description of conjugacy classes of split tori}

Let $\mathfrak{G}$ be a reductive group scheme  %of constant type
 over a base
%$k$-scheme $\bX$ 
scheme $X$ and let $\mathfrak{S}$ be a split subtorus of $\mathfrak{G}$.
According to \cite[XI.5.9]{SGA3}, the fppf sheaf 
$\bN_\mathfrak{G}(\mathfrak{S})/\bZ_\mathfrak{G}(\mathfrak{S})$ is representable
by a $X$--group scheme $\bW(\mathfrak{S})$ called the Weyl group scheme.
The action of $\bN_\mathfrak{G}(\mathfrak{S})$ on $\mathfrak{S}$ gives rise
to a monomorphism $i :\bW(\mathfrak{S}) \to \bAut(\mathfrak{S})$
which is an open immersion so that $\bW(\mathfrak{S})$ is quasi-finite, separated and 
\'etale over $X$ ({\it loc. cit.}).

\begin{lemma} \label{critical} $\bW(\mathfrak{S})$ is a finite group scheme over $X$ which is Zariski-locally constant.
\end{lemma}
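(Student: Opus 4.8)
The plan is to exploit the open immersion $i:\bW(\mathfrak{S})\hookrightarrow\bAut(\mathfrak{S})$ together with the fact that, since $\mathfrak{S}$ is split of some rank $r$, its automorphism scheme is the \emph{constant} group scheme $\bAut(\mathfrak{S})\simeq\underline{\mathrm{GL}_r(\mathbb{Z})}_X$ (an automorphism of $\mathfrak{S}$ is the same as an automorphism of its character lattice $X^*(\mathfrak{S})\simeq\mathbb{Z}^r$). Writing the image of $i$ as a disjoint union $\coprod_{g}U_g$ of open subschemes $U_g\subseteq X$ indexed by $g\in\mathrm{GL}_r(\mathbb{Z})$ (the unit section forcing the identity locus $U_e$ to be all of $X$), the content of the lemma becomes exactly that each $U_g$ is \emph{clopen} and that only finitely many are nonempty near a given point. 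Granting this, around any $x_0$ the finite set $S_0=\{g:x_0\in U_g\}=\bW(\mathfrak{S})_{x_0}$ is a finite subgroup of $\mathrm{GL}_r(\mathbb{Z})$, and $\bW(\mathfrak{S})$ restricts to the constant group scheme $\underline{S_0}$ on a neighbourhood, yielding both finiteness and Zariski-local constancy at one stroke. I would stress that it is precisely the \emph{splitness} of $\mathfrak{S}$ — making $\bAut(\mathfrak{S})$ constant rather than merely finite \'etale — that upgrades ``\'etale-locally constant'' to ``Zariski-locally constant'': a finite \'etale group scheme need not be Zariski-locally constant, so the embedding into a constant ambient group is essential.

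The main obstacle, and the only genuine input, is to promote the quasi-finiteness of $\bW(\mathfrak{S})\to X$ recorded from \cite[XI.5.9]{SGA3} to honest \emph{finiteness}. I would obtain this by reduction to the Weyl group of a maximal torus, checking finiteness \'etale-locally on $X$ (finiteness being fpqc-local on the base). The centralizer $\bZ_{\mathfrak{G}}(\mathfrak{S})$ is a reductive $X$-group scheme in which $\mathfrak{S}$ is central, and $\bN_{\mathfrak{G}}(\mathfrak{S})$ normalizes it, so $\bW(\mathfrak{S})=\bN_{\mathfrak{G}}(\mathfrak{S})/\bZ_{\mathfrak{G}}(\mathfrak{S})$. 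Choosing, \'etale-locally, a maximal torus $\mathfrak{T}$ of $\bZ_{\mathfrak{G}}(\mathfrak{S})$ (hence of $\mathfrak{G}$) with $\mathfrak{S}\subseteq\mathfrak{T}$, and using that maximal tori of $\bZ_{\mathfrak{G}}(\mathfrak{S})$ are conjugate under it, every local section of $\bN_{\mathfrak{G}}(\mathfrak{S})$ can be corrected by an element of $\bZ_{\mathfrak{G}}(\mathfrak{S})$ to normalize $\mathfrak{T}$. This exhibits $\bW(\mathfrak{S})$ as a subquotient of the classical Weyl group scheme $\bN_{\mathfrak{G}}(\mathfrak{T})/\mathfrak{T}$, which is finite \'etale over $X$; hence so is $\bW(\mathfrak{S})$. (Alternatively one may invoke directly the SGA3 statement that $\bN_{\mathfrak{G}}(\mathfrak{S})/\bZ_{\mathfrak{G}}(\mathfrak{S})$ is finite \'etale; I prefer recording the reduction, as it isolates exactly why finiteness holds and keeps the argument self-contained.)

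With finiteness established the remainder is formal. Since $\bW(\mathfrak{S})\to X$ is finite it is proper, and as $\bAut(\mathfrak{S})=\underline{\mathrm{GL}_r(\mathbb{Z})}_X$ is separated over $X$ the open immersion $i$ is then itself proper, hence a \emph{clopen} immersion; thus each $U_g$ is open and closed in $X$ and, by properness, only finitely many meet any quasi-compact open. Moreover $\bW(\mathfrak{S})\to X$ is finite and \'etale, so finite locally free, whence the fibre-cardinality function $x\mapsto|\bW(\mathfrak{S})_x|$ is locally constant (it equals the rank, all fibre points being rational over the base as they lie in the constant group scheme). Restricting to a connected neighbourhood $\bigcap_{g\in S_0}U_g$ of $x_0$, openness of the $U_g$ with $g\in S_0$ gives $S_x\supseteq S_0$, while local constancy of cardinality gives $|S_x|=|S_0|$, forcing $S_x=S_0$ throughout. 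Therefore $\bW(\mathfrak{S})$ is, Zariski-locally on $X$, the constant group scheme attached to the finite group $S_0\leq\mathrm{GL}_r(\mathbb{Z})$, which is exactly the assertion.
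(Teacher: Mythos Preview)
Your proposal is correct and the overall architecture matches the paper's: first establish that $\bW(\mathfrak{S})\to X$ is \emph{finite} (upgrading the quasi-finiteness from \cite[XI.5.9]{SGA3}), then exploit the open immersion into the constant group $\GL_r(\Z)_X$ to deduce Zariski-local constancy. The second half is essentially the same in both arguments, only packaged differently: the paper reduces to $X$ affine, notes that then only finitely many components $\bW(\mathfrak{S})_\sigma$ are nonempty, observes each maps to $X$ by a clopen immersion, and builds a Boolean open cover indexed by $\{+,-\}^\Sigma$; you instead argue that $i$ is proper (finite source, separated target) hence a clopen immersion, and then use local constancy of the rank of the finite \'etale map. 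These are interchangeable.

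The genuine difference is in how finiteness is obtained. The paper does \emph{not} pass to a maximal torus. Instead it uses that $\mathfrak{L}=\bZ_{\mathfrak{G}}(\mathfrak{S})$ is a Levi subgroup, takes its radical $\mathfrak{R}$ (the maximal central subtorus), invokes the ``critical subgroup'' property $\mathfrak{L}=\bZ_{\mathfrak{G}}(\mathfrak{R})$, and uses $\bN_{\mathfrak{G}}(\mathfrak{L})=\bN_{\mathfrak{G}}(\mathfrak{R})$ to produce a \emph{global closed immersion} $\bW(\mathfrak{S})\hookrightarrow\bW(\mathfrak{R})$; finiteness of $\bW(\mathfrak{R})$ is then a direct citation of \cite[XXII.5.10.9]{SGA3}. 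This has the advantage of being global and of reducing to a single clean SGA3 reference. Your route via a maximal torus $\mathfrak{T}\supset\mathfrak{S}$ of $\bZ_{\mathfrak{G}}(\mathfrak{S})$ is the classical ``relative Weyl group is a subquotient of the absolute Weyl group'' argument; it is valid, but note that it only runs \'etale-locally (both the existence of $\mathfrak{T}$ and the conjugacy correction live there), and the word ``subquotient'' hides a small amount of work: one must check that the relevant closed subgroup of the finite \'etale $\bW(\mathfrak{T})$ and its quotient by $\bW(\bZ_{\mathfrak{G}}(\mathfrak{S}),\mathfrak{T})$ are again finite over the base. None of this is hard, and finiteness descends along the \'etale cover, so your argument goes through; the paper's approach simply avoids these local manipulations by finding a global comparison torus $\mathfrak{R}$ already sitting inside $\mathfrak{G}$.
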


\begin{proof} We shall prove firstly that $\bW(\mathfrak{S})$ is finite over $X$.
The centralizer $ \mathfrak{L}:=\bZ_\mathfrak{G}(\mathfrak{S})$ is a Levi subgroup of   a 
parabolic subgroup $\mathfrak{P}$ of $\mathfrak{G}$ \cite[XXVI.6.2]{SGA3}. We denote by $\mathfrak{R}$ the  
radical of $\mathfrak{L},$ namely its 
maximal central subtorus  \cite[XXII.4.3.6]{SGA3}. Observe that $\mathfrak{L}$ is a critical 
$X$--subgroup of $\mathfrak{G}$ \cite[XXVI.1.13]{SGA3}, i.e. we have
$$
\mathfrak{L}= \bZ_{\mathfrak{G}}(\mathfrak{R}).
$$
%Since $\mathfrak{R}$ is the radical of $\bZ_{\mathfrak{G}}(\mathfrak{R})$, we say 
%that $\mathfrak{R}$ is a critical subtorus of 
%$\mathfrak{G}$. 
The Weyl group scheme $\bW(\mathfrak{R})$ is finite according to 
\cite[XXII.5.10.9]{SGA3}. We shall link $\bW(\mathfrak{R})$ and $\bW(\mathfrak{S})$
by using the fact that $\bN_\mathfrak{G}(\mathfrak{L}) = \bN_\mathfrak{G}(\mathfrak{R})$.
Since $ \mathfrak{L} =\bZ_\mathfrak{G}(\mathfrak{S})$, 
we  have  $
\bN_\mathfrak{G}(\mathfrak{S}) \subset \bN_\mathfrak{G}(\mathfrak{L}) = \bN_\mathfrak{G}(\mathfrak{R})$
and both  are closed subgroups of $\mathfrak{G}$ \cite[XI.5.9]{SGA3}. 
In particular $  \bN_\mathfrak{G}(\mathfrak{S})$ is a closed subgroup of
$\bN_\mathfrak{G}(\mathfrak{R})$.
By moding out by $\mathfrak{L}= \bZ_{\mathfrak{G}}(\mathfrak{S})= 
\bZ_{\mathfrak{G}}(\mathfrak{R})$, 
we get a closed immersion of $X$--groups $\bW(\mathfrak{S}) \to \bW(\mathfrak{R})$ 
(it is a closed immersion
by fppf descent \cite[Prop 2.7.1(xii)]{EGAIV}) which is in particular a finite morphism. 
Since  $\bW(\mathfrak{R})$ is  finite over 
$X$, we conclude that  $\bW(\mathfrak{S})$ is finite over $X$.

From what has been shown heretofore  we get that 
$\bW(\mathfrak{S})$ is finite \'etale over $X.$ Consider the 
 open immersion $i :\bW(\mathfrak{S}) \to \bAut(\mathfrak{S})$.
To show that $\bW(\mathfrak{S})$ is Zariski-locally constant, we can obviously assume that
$X$ is affine, hence that is $\bW(\mathfrak{S})$ as well.

We use now that  $\mathfrak{S}$ is split, that is $\mathfrak{S}\simeq \GG_m^r$ for some $r \geq 0$.
According to \cite[VII.1.5]{SGA3}, the fppf sheaf $\bAut(\mathfrak{\GG_m^r})$ is representable  
by $\GL_r(\Z)_{X}$ where 
$$\GL_r(\Z)_{X}= \bigsqcup_{\sigma \in {\rm GL}_r(\Z)} X_\sigma$$ stands for 
the constant $X$--group scheme attached
to the abstract group ${\rm GL}_r(\Z)$. The morphism $i$ induces a decomposition in open subsets
$$\bW(\mathfrak{S})  = \bigsqcup_{\sigma \in {\rm GL}_r(\Z)} \bW(\mathfrak{S})_\sigma$$
where $ \bW(\mathfrak{S})_\sigma$ is the inverse image of $X_{\sigma}$ under $i$.
Since $\bW(\mathfrak{S})$ is affine, almost all $\bW(\mathfrak{S})_\sigma$ are 
empty so that there exists a finite subset $\Sigma$ of ${\rm GL}_r(\Z)$ such that
$\bW(\mathfrak{S})  = \bigsqcup_{\sigma \in \Sigma} \bW(\mathfrak{S})_\sigma$.
Since $i$ is an open immersion, each morphism $\bW(\mathfrak{S})_\sigma \to X$ is a clopen $X$--immersion.

For each $\sigma \in \Sigma$, we denote by $U^+_{\sigma}$ the (isomorphic) image of  $\bW(\mathfrak{S})_\sigma$
in $X$ and by $U^{-}_{\sigma} = \bigcup\limits_{\tau \not = \sigma} U^+_\tau$.
For each function $\epsilon : \Sigma \to \{+,-\}$, we define the open subset 
 $U^\epsilon= \bigcap \limits_{\sigma \in \Sigma} U^{\epsilon(\sigma)}_{\sigma}$ of $X$.
Then $(U^{\epsilon})_{\epsilon}$ (for $\epsilon$ running over the functions  $\Sigma \to \{+,-\}$)
is an open cover of $X$ 
such that $\bW(\mathfrak{S}) \times_X U^\epsilon$ is a finite constant $U^\epsilon$-scheme for each such function 
$\epsilon$. We conclude that $\bW(\mathfrak{S})$ is a finite $X$--group scheme which is locally constant for
the Zariski topology.
\end{proof}

%\bigskip

%Let $\mathfrak{G}$ be a reductive group scheme  of constant type over a base
%$k$-scheme $X$ and let $\mathfrak{S}$ be a split subtorus of $\mathfrak{G}$.
%We define its Weyl group scheme by 
%$$
%\bW(\mathfrak{S})= 
%\bN_\mathfrak{G}(\mathfrak{S})/\bZ_\mathfrak{G}(\mathfrak{S})
%$$
%which is a constant finite group scheme 
%\cite[XI.5.9]{SGA3}.
The fppf sheaf $\bY= \mathfrak{G}/\bN_\mathfrak{G}(\mathfrak{S})$ is representable 
%({\it ibid}, 5.3.bis)
\cite[XI.5.3.bis]{SGA3}
and  for each scheme $X'$ over $X$, the  elements of the set $\bY(X')$ are in one-to-one 
correspondence with 
$X'$-subtori $\mathfrak{S}'$ of $\mathfrak{G}_{X'}$ which are fppf-locally conjugate to 
$\mathfrak{S}_{X'}$.
Since $\bN_\mathfrak{G}(\mathfrak{S})$ is smooth, locally conjugation with respect 
to \'etale 
topology is equivalent to conjugacy  in the fppf topology.

The orbits of $\mathfrak{G}(X)$ on $\bY(X)$ can be described by means of the
exact sequence $$
\bY(X) \stackrel{\phi}{\longrightarrow} H^1\big(X, \bN_\mathfrak{G}(\mathfrak{S})\big)
\longrightarrow H^1(X, \mathfrak{G})
$$
arising from an exact sequence
$$
1\longrightarrow \bN_{\mathfrak{G}}(\mathfrak{S})\longrightarrow 
\mathfrak{G}\longrightarrow \bY
\longrightarrow 1.
$$
More precisely,
 we have a natural bijection \cite[III.3.2.4]{Gi}
$$
\mathfrak{G}(X) \,  \backslash \,  \bY(X) \simlgr
 \ker\Bigl( H^1\big(X,\bN_\mathfrak{G}(\mathfrak{S})\big) \to  H^1(X,\mathfrak{G}) \Bigr) .
$$

To summarize,
$$
\ker\Bigl( H^1\big(X,\bN_\mathfrak{G}(\mathfrak{S})\big) \to  H^1(X,\mathfrak{G}) \Bigr)
$$
classifies the $\mathfrak{G}(X)$--conjugacy classes of
subtori $\mathfrak{S}'$ of $\mathfrak{G}$ which are locally-\'etale $\mathfrak{G}$-conjugate to
$\mathfrak{S}$. For each such $\mathfrak{S}'$,  we denote by
 $\gamma(\mathfrak{S}') \in H^1\big(X,\bN_\mathfrak{G}(\mathfrak{S})\big)$
its $\mathfrak{G}(X)$-conjugacy class.
In terms of torsors, %one attaches to $\mathfrak{S}'$ 
$\gamma(\mathfrak{S}')$ is the class of the strict transporter
 ${\bf \rm Transpstr}_{\mathfrak{G}}(\mathfrak{S}, \mathfrak{S}')$  which is  an $\bN_\mathfrak{G}(\mathfrak{S})$-torsor \cite{Gi}.
 We recall here its  definition: for each $X'\to X$
$$
{\bf \rm Transpstr}_{\mathfrak{G}}(\mathfrak{S}, \mathfrak{S}')(X') = \Bigl\{ g \in \mathfrak{G}(X') \, \mid \,
g \, \mathfrak{S}(X'') \, g^{-1}=\mathfrak{S}'(X'') \enskip \, \forall\, 
X'' \to X' \Bigr\}.
$$
The converse map is given as follows  (\cite[Lemme 2.1]{Gi}). Consider an  
$\bN_\mathfrak{G}(\mathfrak{S})$-torsor $\bE$ equipped with a
trivialization 
$$
u:  \mathfrak{G} \simlgr \bE \wedge^{\bN_\mathfrak{G}(\mathfrak{S})} \mathfrak{G}.
$$
where $\wedge$ is the contracted product. Then  
$u^{-1}(\bE \wedge^{\bN_\mathfrak{G}(\mathfrak{S})} \mathfrak{S})$ is a 
%split 
subtorus of
%$\bE \wedge^{\bN_\mathfrak{G}(\mathfrak{S})} \mathfrak{G} \buildrel  u  \over \simla \mathfrak{G}$.
$\mathfrak{G}$.

Furthermore, each such $\mathfrak{S}'$  is Zariski-locally $\mathfrak{G}$-conjugated
to $\mathfrak{S}$ if and only if  
$$
\gamma(\mathfrak{S}') \in H^1_{Zar}\big(X,\bN_\mathfrak{G}(\mathfrak{S})\big)
 \subset H^1\big(X,\bN_\mathfrak{G}(\mathfrak{S})\big).
 $$
It follows that the set
 $$
\ker\Bigl( H^1_{Zar}\big(X,\bN_\mathfrak{G}(\mathfrak{S})\big) \to  H^1_{Zar}(X,\mathfrak{G}) \Bigr)
$$
 classifies the $\mathfrak{G}(X)$--conjugacy classes of tori which are locally 
 conjugate to $\mathfrak{S}$ for the Zariski topology.

Now we consider  the following exact sequence of $X$-group-schemes
$$
1 \to \bZ_\mathfrak{G}(\mathfrak{S}) \to \bN_\mathfrak{G}(\mathfrak{S}) \to \bW(\mathfrak{S}) \to 1 .
$$
%As before, 
There is a natural action of  $\bW(\mathfrak{S})(X)$ on 
$H^1(X, \bZ_{\mathfrak{G}}(\mathfrak{S}))$
and it  induces a bijection \cite[III.3.3.1]{Gi}
$$
H^1\big(X, {\bZ}_{\mathfrak{G}}(\mathfrak{S})\big) \,  / \,  {\bf W}(\mathfrak{S})(X) \simlgr 
\ker\Bigl( \text{\it H}^\text{\,1}\big(X,\bN_\mathfrak{G}(\mathfrak{S})\big) \to  
\text{\it H}^\text{\,1}\big(X,\bW(\mathfrak{S})\big) \Bigr).
$$
We consider then the subset $
H^1_{Zar}( X, {\bZ}_\mathfrak{G}(\mathfrak{S})) \,  / \,  \bW(\mathfrak{S})(X)
$
of 
$$
H^1_{Zar}(X,\bN_\mathfrak{G}(\mathfrak{S})) \subset H^1(X,\bN_\mathfrak{G}(\mathfrak{S})).
$$

\begin{proposition}\label{costanera} Assume that the base scheme $X$ is connected.

\smallskip

\noindent {\rm (1)} The pointed set
$$
\ker\Bigl( H^1\big( X, {\bZ}_\mathfrak{G}(\mathfrak{S})\big) \, / \,  {\bW}(\mathfrak{S})(X)
\to \text{\it H}^\text{\,1}( X, \mathfrak{G}) \Bigr)
$$
classifies split subtori of $\mathfrak{G}$ which are
locally conjugate to $\mathfrak{S}$ for the \'etale topology.

\smallskip

\noindent {\rm (2) } We have $$ \ker\Bigl( H^1_{Zar}\big(X, {\bZ}_\mathfrak{G}(\mathfrak{S})\big)
\to \text{\it H}^\text{\,1}_{\rm \it Zar}( X, \mathfrak{G}) \Bigr) \, \, \simlgr \, \,
\ker\Bigl( \text{\it H}^\text{\,1}\big(X, {\bZ}_\mathfrak{G}(\mathfrak{S})\big)
\to\text{\it H}^\text{\,1}(X, \mathfrak{G}) \Bigr).
$$

\noindent {\rm (3)}  The pointed set
$$
\ker\Bigl( H^1_{Zar}\big(X, {\bZ}_\mathfrak{G}(\mathfrak{S})\big) \, / \,  \bW(\mathfrak{S})(X)
\to \text{\it H}^\text{\,1}_{\rm \it Zar}( X, \mathfrak{G}) \Bigr)
$$
classifies $\mathfrak{G}(X)$-conjugacy classes of split subtori of $\mathfrak{G}$ which are
locally conjugate to $\mathfrak{S}$ for the Zariski topology or  equivalently locally for the
\'etale topology.
\end{proposition}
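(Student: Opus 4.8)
The plan is to derive all three assertions from the two exact sequences of $X$--group schemes recalled above, $1\to \bN_\mathfrak{G}(\mathfrak{S})\to \mathfrak{G}\to \bY\to 1$ and $1\to \bZ_\mathfrak{G}(\mathfrak{S})\to \bN_\mathfrak{G}(\mathfrak{S})\to \bW(\mathfrak{S})\to 1$, from Giraud's bijection $H^1\big(X,\bZ_\mathfrak{G}(\mathfrak{S})\big)/\bW(\mathfrak{S})(X)\simlgr \ker\big(H^1(X,\bN_\mathfrak{G}(\mathfrak{S}))\to H^1(X,\bW(\mathfrak{S}))\big)$, from the dictionary already established between $\ker\big(H^1(X,\bN_\mathfrak{G}(\mathfrak{S}))\to H^1(X,\mathfrak{G})\big)$ and $\mathfrak{G}(X)$--conjugacy classes of subtori locally conjugate to $\mathfrak{S}$, and from Lemma~\ref{critical}. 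The three items will be obtained, in order, by (1) a cohomological characterisation of splitness, (2) a Zariski/\'etale comparison which is the geometric heart of the matter, and (3) the combination of the two.

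For (1), the point is that since $\bZ_\mathfrak{G}(\mathfrak{S})$ centralises $\mathfrak{S}$, the conjugation action of $\bN_\mathfrak{G}(\mathfrak{S})$ on $\mathfrak{S}$ factors through the monomorphism $i:\bW(\mathfrak{S})\hookrightarrow \bAut(\mathfrak{S})\simeq \GL_r(\Z)_X$. Hence, for a subtorus $\mathfrak{S}'$ locally--\'etale conjugate to $\mathfrak{S}$, its isomorphism type as an $X$--torus is the image of its class $\gamma(\mathfrak{S}')$ in $H^1(X,\bAut(\mathfrak{S}))$, and $\mathfrak{S}'$ is split if and only if this image is trivial. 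I claim this is equivalent to $\gamma(\mathfrak{S}')$ lying in $\ker\big(H^1(X,\bN_\mathfrak{G}(\mathfrak{S}))\to H^1(X,\bW(\mathfrak{S}))\big)$. One implication is functoriality; for the other I use that $\bW(\mathfrak{S})$ is finite \'etale and Zariski--locally constant (Lemma~\ref{critical}), so over the \emph{connected} base $X$ it is a constant group $W_X$ with $W\subset\GL_r(\Z)$, and the exact sequence of pointed sets attached to $W_X\subset\GL_r(\Z)_X$ identifies $\ker\big(H^1(X,W_X)\to H^1(X,\GL_r(\Z)_X)\big)$ with $\GL_r(\Z)\backslash(\GL_r(\Z)/W)$, which is a single point. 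Intersecting the condition ``split'' with ``$\gamma(\mathfrak{S}')$ dies in $H^1(X,\mathfrak{G})$'' and transporting through Giraud's bijection, whose composite to $H^1(X,\mathfrak{G})$ is $\bW(\mathfrak{S})(X)$--invariant, yields exactly $\ker\big(H^1(X,\bZ_\mathfrak{G}(\mathfrak{S}))/\bW(\mathfrak{S})(X)\to H^1(X,\mathfrak{G})\big)$.

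For (2), observe first that $H^1_{Zar}\big(X,\bZ_\mathfrak{G}(\mathfrak{S})\big)$ is by definition the subset of $H^1\big(X,\bZ_\mathfrak{G}(\mathfrak{S})\big)$ consisting of classes of Zariski--locally trivial torsors, so the map in question is injective and the whole content is surjectivity: every $\bZ_\mathfrak{G}(\mathfrak{S})$--torsor that becomes trivial over $\mathfrak{G}$ is already Zariski--locally trivial. Such a torsor is $s^{*}\mathfrak{G}$ for a section $s\in(\mathfrak{G}/\bZ_\mathfrak{G}(\mathfrak{S}))(X)$, so I must lift $s$ Zariski--locally to $\mathfrak{G}$. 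This is where the hypothesis that $\mathfrak{S}$ be \emph{split} is indispensable: recalling from the proof of Lemma~\ref{critical} that $\mathfrak{L}=\bZ_\mathfrak{G}(\mathfrak{S})$ is the Levi of a parabolic, one chooses a cocharacter $\lambda$ of the split torus $\mathfrak{S}$ with $\bZ_\mathfrak{G}(\lambda)=\bZ_\mathfrak{G}(\mathfrak{S})$, so that the two opposite parabolics $\mathfrak{P}(\pm\lambda)$, their common Levi $\bZ_\mathfrak{G}(\mathfrak{S})$, and the open big cell $\bU(-\lambda)\cdot\bZ_\mathfrak{G}(\mathfrak{S})\cdot\bU(\lambda)\cong\bU(-\lambda)\times\bZ_\mathfrak{G}(\mathfrak{S})\times\bU(\lambda)$ are all defined over $X$. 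The big cell trivialises $\mathfrak{G}\to\mathfrak{G}/\bZ_\mathfrak{G}(\mathfrak{S})$ over an open subscheme, and since the unipotent radicals $\bU(\pm\lambda)$ are split, hence special, the dynamic method lets one slide $s$ into a translate of the big cell and lift it Zariski--locally; thus $s^{*}\mathfrak{G}$ is Zariski--locally trivial.

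Finally, (3) follows by combining (1) and (2). The subset $\ker\big(H^1(X,\bZ_\mathfrak{G}(\mathfrak{S}))\to H^1(X,\mathfrak{G})\big)$ is stable under $\bW(\mathfrak{S})(X)$ and, by (2), coincides with $\ker\big(H^1_{Zar}(X,\bZ_\mathfrak{G}(\mathfrak{S}))\to H^1_{Zar}(X,\mathfrak{G})\big)$; passing to $\bW(\mathfrak{S})(X)$--orbits therefore identifies the pointed set of (3) with that of (1), and the latter classifies the $\mathfrak{G}(X)$--conjugacy classes of split subtori locally--\'etale conjugate to $\mathfrak{S}$. The big--cell argument of (2) shows moreover that such split subtori are automatically Zariski--locally conjugate to $\mathfrak{S}$, which gives the asserted equivalence of the two topologies. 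I expect the surjectivity in (2) to be the only genuine obstacle: items (1) and (3) are formal consequences of Giraud's exact sequences and the reduction of $\bW(\mathfrak{S})$ to a constant group, whereas (2) requires real geometric input and is exactly where splitness of $\mathfrak{S}$, rather than of an arbitrary torus, enters.
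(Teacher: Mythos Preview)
Your treatment of (1) and (3) matches the paper's almost exactly: both reduce the question of whether $\mathfrak{S}'$ is split to the vanishing of $\gamma(\mathfrak{S}')$ in $H^1\big(X,\bW(\mathfrak{S})\big)$, using that $\bW(\mathfrak{S})$ is a constant finite subgroup of $\GL_r(\Z)_X$ over the connected base (Lemma~\ref{critical}) so that $H^1\big(X,\bW(\mathfrak{S})\big)\to H^1\big(X,\bAut(\mathfrak{S})\big)$ has trivial kernel. The paper isolates this as a separate Lemma (Lemma~\ref{compat}); your exact-sequence computation $\GL_r(\Z)\backslash(\GL_r(\Z)/W)=\{\ast\}$ is an equivalent way to say the same thing.

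For (2) you take a genuinely different route. The paper's proof is one line: the statement is Zariski--local, so one may assume $X=\Spec(R)$ with $R$ local, and then the vanishing of $\ker\big(H^1(R,\bZ_\mathfrak{G}(\mathfrak{S}))\to H^1(R,\mathfrak{G})\big)$ is \cite[XXVI.5.10(i)]{SGA3}. You instead try to prove this Zariski--local triviality directly via the dynamic method and the big cell $\bU(-\lambda)\cdot\bZ_\mathfrak{G}(\mathfrak{S})\cdot\bU(\lambda)$. That approach is perfectly viable and has the advantage of making the geometric mechanism visible, but as written it has a gap: the phrase ``the dynamic method lets one slide $s$ into a translate of the big cell'' is exactly the nontrivial step and you do not justify it. The image of the big cell in $\mathfrak{G}/\bZ_\mathfrak{G}(\mathfrak{S})$ is only an open subscheme, and you must explain why, Zariski--locally on $X$, the section $s$ can be moved into this open by an element of $\mathfrak{G}$. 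Over a general base the $\mathfrak{G}(X)$--translates of the big cell need not cover $\mathfrak{G}$, and specialness of $\bU(\pm\lambda)$ alone does not give this; one still has to argue at each local ring (using, say, Bruhat decomposition over the residue field plus smoothness to lift), at which point you are essentially reproving the SGA3 statement the paper cites. So your (2) is correct in strategy but incomplete in execution; the paper's proof trades explicitness for a clean black-box reference.
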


The connectedness assumption is used to ensure that $\bW(\mathfrak{S})$ is
a finite  constant $X$-group (Lemma \ref{critical}). The proof  of the proposition will be given below. It uses the fact that $\bW(\mathfrak{S})$ is
an open subgroup of the constant $X$-group $\bAut(\mathfrak{S})$.

\begin{lemma}\label{compat}
{\rm (1)} The map $H^1\big(X, \bW(\mathfrak{S})\big) \to H^1\big(X,\bAut(\mathfrak{S})\big)$ has  trivial kernel.

\smallskip

\noindent {\rm (2)} The compositum
$$
\big(\mathfrak{G}/\bN_{\mathfrak{G}}(\mathfrak{S})\big)(X) 
\longrightarrow H^1\big(X,\bN_\mathfrak{G}(\mathfrak{S})\big) \longrightarrow 
H^1\big(X,\bW(\mathfrak{S})\big)  \longrightarrow
H^1\big(X, \bAut(\mathfrak{S})\big)
$$
maps   a   subtorus $\mathfrak{S}'$ of $\mathfrak{G}$ which is  \'etale-locally 
conjugate to $\mathfrak{S}$
to the class of the $\bAut(\mathfrak{S})$-torsor ${\bf {\rm Isom}}_{gr}(\mathfrak{S}, \mathfrak{S}')$.
\end{lemma}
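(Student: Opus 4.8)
The plan is to reduce both assertions to the single structural fact furnished by Lemma~\ref{critical}: over the \emph{connected} base $X$ the two groups in sight are \emph{constant}. By Lemma~\ref{critical} we have $\bW(\mathfrak{S}) = W_X$ for a finite abstract group $W$, while $\bAut(\mathfrak{S}) = \GL_r(\Z)_X$ since $\mathfrak{S}\simeq\GG_m^r$ is split (cf. \cite[VII.1.5]{SGA3}). The open immersion $i$ is then nothing but the constant inclusion of a finite subgroup $W\subset\GL_r(\Z)$, and the homogeneous space $\bAut(\mathfrak{S})/\bW(\mathfrak{S})$ is the constant $X$-scheme $(\GL_r(\Z)/W)_X$.

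For (1) I would invoke the exact sequence of pointed sets \cite{Gi} attached to the inclusion $\bW(\mathfrak{S})\hookrightarrow\bAut(\mathfrak{S})$,
$$
\bAut(\mathfrak{S})(X) \to \big(\bAut(\mathfrak{S})/\bW(\mathfrak{S})\big)(X) \xrightarrow{\ \delta\ } H^1\big(X,\bW(\mathfrak{S})\big) \xrightarrow{\ i_*\ } H^1\big(X,\bAut(\mathfrak{S})\big).
$$
Exactness identifies $\ker(i_*)$ with the set of orbits of $\bAut(\mathfrak{S})(X)$ acting on $\big(\bAut(\mathfrak{S})/\bW(\mathfrak{S})\big)(X)$. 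Because $X$ is connected these two sets are just $\GL_r(\Z)$ and $\GL_r(\Z)/W$ equipped with the left translation action, which is transitive; hence there is a single orbit and $\ker(i_*)$ is trivial. Equivalently, in $\pi_1(X,\bar x)$-terms, a $W$-valued monodromy whose image in $\GL_r(\Z)$ is conjugate to the trivial homomorphism must itself be trivial, since $W\hookrightarrow\GL_r(\Z)$ is injective.

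For (2) I would compute the induced torsor directly on the strict transporter. Recall that $\gamma(\mathfrak{S}')$ is the class of the $\bN_\mathfrak{G}(\mathfrak{S})$-torsor $\bE := {\bf \rm Transpstr}_{\mathfrak{G}}(\mathfrak{S},\mathfrak{S}')$, and that the composite in the statement is the pushforward of $\bE$ along $\bN_\mathfrak{G}(\mathfrak{S})\to\bW(\mathfrak{S})\xrightarrow{i}\bAut(\mathfrak{S})$. Conjugation defines a morphism $c:\bE\to{\bf {\rm Isom}}_{gr}(\mathfrak{S},\mathfrak{S}')$, $g\mapsto(s\mapsto gsg^{-1})$, and for $n\in\bN_\mathfrak{G}(\mathfrak{S})$ one computes $c(gn) = c(g)\circ\mathrm{int}(n)|_{\mathfrak{S}}$, where $\mathrm{int}(n)|_{\mathfrak{S}}$ is exactly the image of $n$ under $\bN_\mathfrak{G}(\mathfrak{S})\to\bW(\mathfrak{S})\xrightarrow{i}\bAut(\mathfrak{S})$ (this being the very definition of the Weyl-group action on $\mathfrak{S}$, and the reason $\bZ_\mathfrak{G}(\mathfrak{S})$ drops out). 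Thus $c$ is equivariant for the right $\bN_\mathfrak{G}(\mathfrak{S})$-action on $\bE$ and the precomposition action of $\bAut(\mathfrak{S})$ on ${\bf {\rm Isom}}_{gr}(\mathfrak{S},\mathfrak{S}')$, so it factors through an $\bAut(\mathfrak{S})$-equivariant map $\bE\wedge^{\bN_\mathfrak{G}(\mathfrak{S})}\bAut(\mathfrak{S})\to{\bf {\rm Isom}}_{gr}(\mathfrak{S},\mathfrak{S}')$. Since $\mathfrak{S}'$ is \'etale-locally conjugate to $\mathfrak{S}$, the target is itself an $\bAut(\mathfrak{S})$-torsor and this map is an isomorphism of torsors, as it becomes one after any \'etale trivialization; this identifies the image of $\mathfrak{S}'$ with the class of ${\bf {\rm Isom}}_{gr}(\mathfrak{S},\mathfrak{S}')$.

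The constancy input is already granted by Lemma~\ref{critical}, and the verification that $c$ is an isomorphism after trivializing is immediate, so neither is a real obstacle. The one place that demands genuine care is the left/right bookkeeping in (2): one must match the right $\bN_\mathfrak{G}(\mathfrak{S})$-action on the transporter with the precomposition action of $\bAut(\mathfrak{S})$ on ${\bf {\rm Isom}}_{gr}$ \emph{through the Weyl quotient}, and it is precisely the identity $c(gn)=c(g)\circ\mathrm{int}(n)|_{\mathfrak{S}}$ that forces the structural map to be $i\circ(\text{projection to }\bW)$ rather than any other homomorphism. In (1) the subtlety worth flagging is that $\bAut(\mathfrak{S})=\GL_r(\Z)_X$ is an \emph{infinite} constant group, which is why I prefer the orbit/transitivity argument, valid for any constant group, to a naive finite count.
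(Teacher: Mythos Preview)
Your proof is correct and follows essentially the same route as the paper. For (1) both you and the paper use the long exact sequence attached to the inclusion of constant groups $W_X\hookrightarrow\GL_r(\Z)_X$ and the transitivity of $\GL_r(\Z)$ on $\GL_r(\Z)/W$ (the paper phrases this as $\pi_0(X)$ acting trivially on $\Gamma$, $\GL_r(\Z)$ and $\GL_r(\Z)/\Gamma$); for (2) the paper simply asserts in one sentence that the change of groups $\bN_\mathfrak{G}(\mathfrak{S})\to\bW(\mathfrak{S})\to\bAut(\mathfrak{S})$ sends the strict transporter to ${\rm Isom}_{gr}(\mathfrak{S},\mathfrak{S}')$, which is exactly what your explicit equivariance computation $c(gn)=c(g)\circ\mathrm{int}(n)|_{\mathfrak{S}}$ verifies.
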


\begin{proof}
(1) %We may assume that $\mathfrak{S}= \GG_m^r$,
%so that 
Since $\mathfrak{S}$ is split,
$\bAut(\mathfrak{S})$ is the constant $X$--group associated to ${\rm GL}_r(\mathbb{Z})$.
Hence $\bW(\mathfrak{S})$  is the constant $\X$--group associated to
 a finite subgroup $\Gamma$ of ${\rm GL}_r(\mathbb{Z})$.
We consider an exact sequence of \'etale $\b\X$-sheaves
$$
1 \to {\bf \Gamma}_X \to \GL_r(\mathbb{Z})_X \to (\GL_r(\mathbb{Z}) / \Gamma)_X \to 1.
$$
It gives rise to a long exact sequence of pointed sets
$$
1 \to \Gamma^{\pi_0(X)} \to \GL_r(\mathbb{Z})^{\pi_0( X)} \to
(\GL_r(\mathbb{Z})/ \Gamma)^{\pi_0(X)} \to H^1(X, {\bf \Gamma}_X)
\stackrel{\lambda}{\to} H^1(X, \GL_r(\mathbb{Z})_X).
$$
It follows that $\lambda$ has trivial kernel, because
$\pi_0(X)$
acts trivially on $\Gamma,\GL_r(\mathbb{Z})$ and $\GL_r(\mathbb{Z})/\Gamma$.
%hence
% the map $H^1(\bX, \Gamma_\bX) \to H^1(\bX, \GL_r(\mathbb{Z})_\bX)$ 
%$\lambda$ has trivial kernel as desired.

\smallskip

\noindent (2) The $\mathfrak{G}(X)$--conjugacy class of $\mathfrak{S}'$
is nothing but the class of the strict transporter
 ${\bf \rm Transpstr}_{\mathfrak{G}}(\mathfrak{S}, \mathfrak{S}')$  which is  an 
 $\bN_\mathfrak{G}(\mathfrak{S})$-torsor.
The change of groups $\bN_\mathfrak{G}(\mathfrak{S}) \to \bW(\mathfrak{S}) \to \bAut(\mathfrak{S})$
applied to that transporter yields indeed
  the $\bAut(\mathfrak{S})$-torsor ${\bf \rm Isom}_{gr}(\mathfrak{S}, \mathfrak{S}')$.
\end{proof}

We now proceed with the proof of the Proposition.
\begin{proof}
%We know that $\bZ(\mathfrak{S})$ is the Levi subgroup of a parabolic subgroup
%$\bP$ of $\mathfrak{G}$ \cite[XXVI.6.2]{SGA3}.

\smallskip

\smallskip
\noindent (1)
Consider the diagram of exact sequences of pointed sets
$$
\begin{CD}
&&  \big(\mathfrak{G}/\bN_\mathfrak{G}(\mathfrak{S})\big)(X) & &1 \\
&&@V{\varphi}VV@VVV \\
H^1\big(X,{\bZ}_\mathfrak{G}(\mathfrak{S})\big) @>{i_*}>> H^1\big(X,\bN_\mathfrak{G}(\mathfrak{S})\big) @>>>  H^1\big(X,\bW(\mathfrak{S})\big) \\
&&&& @VVV \\
&&&& H^1(X, \bAut(\mathfrak{S})) . \\
\end{CD}
$$
%We are given a subtorus 
Let $\mathfrak{S}'$ be a subtorus of $\mathfrak{G}$ which is \'etale-locally
conjugate to $\mathfrak{S}$.  According to  Lemma \ref{compat}.(2),
$\mathfrak{S}'$ is split if and only if the image of $\gamma( \mathfrak{S}')$
in  $H^1\big(X, \bAut(\mathfrak{S})\big)$ vanishes. Since
$H^1\big(X,\bW(\mathfrak{S})\big) \to  H^1\big(X, \bAut(\mathfrak{S})\big)$ has trivial kernel by 
the first part of
that Lemma, it follows that
$\mathfrak{S}'$ is split if and only if the image of $\gamma( \mathfrak{S}')$
in  $H^1\big(X, \bW(\mathfrak{S})\big)$ vanishes, that is if and only if
 $\gamma( \mathfrak{S}')$ admits a reduction to ${\bZ}_\mathfrak{G}(\mathfrak{S})$.

 \smallskip
 \noindent (2) The statement is local so we may assume that $X=\Spec(R)$
with $R$ a local ring. We need to show that
the kernel $\ker\Bigl( H^1\big( R, \bZ_\mathfrak{G}(\mathfrak{S})\big)
\to \text{\it H}^\text{\,1}( {\rm \it R}, \mathfrak{G}) \Bigr)$ vanishes.
But this is \cite[XXVI 5.10.(i)]{SGA3}.

\smallskip

\noindent (3) By (2), the
pointed set $$
\ker\Bigl( H^1_{Zar}\big( X, {\bZ}_\mathfrak{G}(\mathfrak{S})\big) / \bW(\mathfrak{S})(X)
\to \text{\it H}^\text{\,1}_{\rm \it Zar}( X, \mathfrak{G}) \Bigr)
$$
maps bijectively to
$\ker\Bigl( H^1( X, {\bZ}_\mathfrak{G}(\mathfrak{S})) / \bW(\mathfrak{S})(X)
$ $\to H^1( X, \mathfrak{G})  \Bigr)$
so it classifies $\mathfrak{G}(X)$-conjugacy classes of split subtori of $\mathfrak{G}$ which are
\'etale-locally conjugated to $\mathfrak{S}$  by (2). But for such a subtorus $\mathfrak{S}'$, 
its conjugacy class
$\gamma( \mathfrak{S}')$ belongs then to $H^1_{Zar}(X, \bN_\mathfrak{G}(\mathfrak{S}))$ 
so that
$\mathfrak{S}'$ is locally conjugated to $\mathfrak{S}$ for the Zariski topology.
\end{proof}

The following strengthens Proposition 10.1 of \cite{CGP}.

\begin{proposition} Assume that $X$ is a connected scheme
and $\mathfrak{S}$ is  a split subtorus  of  $\mathfrak{G}$
such that $\mathfrak{S}$ is generically maximal split in $\mathfrak{G}$.

\smallskip

\noindent {\rm (1)} The torus $\mathfrak{S}$ is a maximal split subtorus of $\mathfrak{G}$ and
${\bZ}_\mathfrak{G}(\mathfrak{S})$ is a Levi subgroup  of a parabolic  subgroup
of $\mathfrak{G}$ which is generically minimal.

\smallskip

\noindent {\rm (2)} The generically maximal split split subtori of $\mathfrak{G}$
are the subtori locally conjugated to $\mathfrak{S}$ for the Zariski topology.
Their $\mathfrak{G}(X)$-conjugacy class  are classified by the pointed set
 $\ker\Big( H^1_{Zar}\big( X, {\bZ}_\mathfrak{G}(\mathfrak{S})\big) / \bW(\mathfrak{S})(X)$ $ \to 
 H^1_{Zar}\big(X, \mathfrak{G}\big)\Big)$.
\end{proposition}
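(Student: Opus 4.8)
The plan is to deduce (1) from Borel--Tits theory applied to the generic fibre together with the constancy of parabolic type over a connected base, and then to obtain (2) by conjugating parabolics and Levi subgroups (rather than the tori themselves) and feeding the outcome into Proposition~\ref{costanera}.

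For (1), I first invoke \cite[XXVI.6.2]{SGA3} (exactly as in the proof of Lemma~\ref{critical}) to present $\mathfrak{L}:=\bZ_\mathfrak{G}(\mathfrak{S})$ as a Levi subgroup of a parabolic subgroup $\mathfrak{P}$ of $\mathfrak{G}$. Over each generic point $\eta$ of $X$ the fibre $\mathfrak{S}_\eta$ is a maximal split torus of the connected reductive $\kappa(\eta)$-group $\mathfrak{G}_\eta$, so by Borel--Tits its centralizer $\mathfrak{L}_\eta$ is the Levi of a minimal parabolic; hence $\mathfrak{P}$ is generically minimal. Maximality of $\mathfrak{S}$ is then formal: a split $X$-subtorus $\mathfrak{S}'\supsetneq\mathfrak{S}$ has strictly larger rank (subtori being saturated), so it would restrict over $\eta$ to a split torus strictly containing the maximal split torus $\mathfrak{S}_\eta$, which is impossible. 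I also record the characterization needed below: since $\mathfrak{S}$ is central in $\mathfrak{L}$ it lies in the radical $\mathfrak{R}=\mathrm{rad}(\mathfrak{L})$, and since $\mathfrak{S}_\eta=(\mathfrak{R}_\eta)_{\mathrm{split}}$ at every generic point, the two split $X$-subtori $\mathfrak{S}$ and $\mathfrak{R}_{\mathrm{split}}$ of $\mathfrak{R}$ coincide; thus $\mathfrak{S}$ is functorially determined by $\mathfrak{L}$ as the maximal split subtorus of its radical.

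For (2), the implication ``Zariski-locally conjugate to $\mathfrak{S}$ $\Rightarrow$ generically maximal split'' is immediate, since such a subtorus is split and becomes conjugate to $\mathfrak{S}$ on a dense open, hence is maximal split at the generic points. For the converse, let $\mathfrak{S}'$ be generically maximal split; applying (1) to $\mathfrak{S}'$ gives $\mathfrak{L}'=\bZ_\mathfrak{G}(\mathfrak{S}')$, a Levi of a generically minimal parabolic $\mathfrak{P}'$, with $\mathfrak{S}'=(\mathrm{rad}\,\mathfrak{L}')_{\mathrm{split}}$. At each generic point $\mathfrak{P}$ and $\mathfrak{P}'$ are minimal, hence conjugate and of the same type; as the type is locally constant and $X$ is connected, $\mathfrak{P}$ and $\mathfrak{P}'$ have the same type $t$ everywhere. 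The transporter $\mathrm{Transp}_\mathfrak{G}(\mathfrak{P},\mathfrak{P}')$ is a torsor under the smooth, self-normalizing group $\bN_\mathfrak{G}(\mathfrak{P})=\mathfrak{P}$ \cite[XXVI.3]{SGA3}, so it is étale-locally trivial and $\mathfrak{P},\mathfrak{P}'$ are étale-locally conjugate. Working on an étale cover where $\mathfrak{P}=\mathfrak{P}'$, the Levis $\mathfrak{L}$ and $\mathfrak{L}'$ are two Levis of a common parabolic, hence conjugate under $\mathrm{rad}_u(\mathfrak{P})$ \cite[XXVI.1.8]{SGA3}; any such conjugation is an isomorphism $\mathfrak{L}\xrightarrow{\sim}\mathfrak{L}'$ of $X$-groups and therefore carries $\mathfrak{R}_{\mathrm{split}}=\mathfrak{S}$ onto $\mathfrak{R}'_{\mathrm{split}}=\mathfrak{S}'$. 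Thus $\mathfrak{S}$ and $\mathfrak{S}'$ are étale-locally conjugate.

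It remains to upgrade étale-local to Zariski-local conjugacy and to read off the classification. Since $\mathfrak{S}'$ is split and étale-locally conjugate to $\mathfrak{S}$, its class $\gamma(\mathfrak{S}')$ lies in $\ker\big(H^1(X,\bN_\mathfrak{G}(\mathfrak{S}))\to H^1(X,\mathfrak{G})\big)$ and, by Proposition~\ref{costanera}(1), admits a reduction to $\bZ_\mathfrak{G}(\mathfrak{S})$; by Proposition~\ref{costanera}(2) this reduction is represented by a Zariski cocycle, and Proposition~\ref{costanera}(3) then identifies $\mathfrak{S}'$ as a subtorus Zariski-locally conjugate to $\mathfrak{S}$. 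This proves the first assertion of (2), and the classification of $\mathfrak{G}(X)$-conjugacy classes by $\ker\big(H^1_{Zar}(X,\bZ_\mathfrak{G}(\mathfrak{S}))/\bW(\mathfrak{S})(X)\to H^1_{Zar}(X,\mathfrak{G})\big)$ is precisely Proposition~\ref{costanera}(3). The step I expect to require the most care is exactly this étale-local conjugacy in the converse of (2): because the split rank can jump at special points, $\mathfrak{S}$ need not be fibrewise maximal there, so one cannot conjugate the tori directly; conjugating the always-comparable, equal-type parabolics and their Levis, and letting $\mathfrak{S}$ ride along as the split part of the radical, is what circumvents this difficulty, after which Proposition~\ref{costanera} handles the passage from the étale to the Zariski topology.
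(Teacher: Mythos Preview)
Your argument is correct, but it takes a substantially longer route than the paper's, and the detour stems from a misreading of what ``maximal split'' means here.

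The paper disposes of the converse in (2) in two lines: a split $X$-torus $\mathfrak{S}'$ which is generically maximal split is in fact maximal split over every local ring $\mathcal{O}_{X,x}$ (indeed over $X$ itself). The reason is the same rank argument you used in (1): any strictly larger split $\mathcal{O}_{X,x}$-torus would have strictly larger rank, hence a strictly larger generic fibre, contradicting generic maximality of $\mathfrak{S}'$. Since $\mathfrak{S}$ and $\mathfrak{S}'$ are then both maximal split over each $\mathcal{O}_{X,x}$, Demazure's conjugacy theorem \cite[XXVI.6.16]{SGA3} gives that they are conjugate over each local ring, i.e.\ Zariski-locally conjugate. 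The classification statement is then Proposition~\ref{costanera}(3), exactly as you say.

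Your concern that ``the split rank can jump at special points, so one cannot conjugate the tori directly'' conflates the fibrewise split rank of $\mathfrak{G}$ with the notion of maximal split subtorus of $\mathfrak{G}$ as an $\mathcal{O}_{X,x}$-group scheme. The latter is what Demazure's theorem is about, and for that notion nothing jumps: a split $X$-torus has constant rank, and maximality over the local ring is tested against other split tori over that ring, all of which restrict to the generic point. So the direct comparison of tori works perfectly well; there is no need to pass through the type of the parabolic, the transporter of parabolics, or conjugacy of Levis, nor to first establish \'etale-local conjugacy and then upgrade via Proposition~\ref{costanera}. Your auxiliary identification $\mathfrak{S}=(\mathrm{rad}\,\mathfrak{L})_{\mathrm{split}}$ is correct but likewise unnecessary. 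What your approach does buy is an independent proof that avoids citing \cite[XXVI.6.16]{SGA3}; but since that theorem is standard and tailor-made for this situation, the paper's one-line appeal to it is the natural argument.
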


\begin{proof}
(1) The first assertion is obvious.
It is known that $\bZ_\mathfrak{G}(\mathfrak{S})$ is a Levi subgroup of   a parabolic subgroup
$\bP$ of $\mathfrak{G}$ \cite[XXVI.6.2]{SGA3}. Since  $\mathfrak{S}$ remains maximal 
split at the generic point of
$X$, it follows that  $\bP \times_X \kappa(X)$ is a minimal parabolic subgroup of 
$\mathfrak{G}_{\kappa(X)}$ 
%and $\bZ_\mathfrak{G}(\mathfrak{S}) \times_\bX \kappa(\bX)$ is a 
%Levi subgroup of $\bP_{\kappa(\bX)}$ 
\cite[XXVI.6]{SGA3}.

\smallskip

\noindent $(2)$ If $\mathfrak{S}'$ is a split torus of $\mathfrak{G}$ which
%$R$-split subtori 
is generically maximal split then it is Zariski-locally maximal split.
In particular, $\mathfrak{S}'$ is Zariski-locally $\mathfrak{G}$-conjugate to $\mathfrak{S}$ by
Demazure's conjugacy theorem \cite[XXVI.6.16]{SGA3}.
Conversely, if $\mathfrak{S}'$ is  $\mathfrak{G}$-conjugate to $\mathfrak{S}$
in the Zariski topology, it is generically maximal split because $\mathfrak{S}$ is.
%is obvious and  
The second assertion follows from Proposition \ref{costanera}(3).
\end{proof}

We apply the above general considerations to the particular case of $X = \Spec(R')$,
 a simple adjoint group scheme $\mathfrak{G}$ over $R'$ and a split torus $\mathfrak{S}$
 of $\mathfrak{G}$ which is generically maximal split. 
 By Proposition~\ref{vanish} we have $H^1(X,\mathfrak{G})=1.$ Thus
 $H^1_{Zar}(X,\mathfrak{G})=1$ and therefore
 $$
 \ker \Bigl( H^1_{Zar}\big(X, {\bZ}_\mathfrak{G}(\mathfrak{S})\big) \to 
 H^1_{Zar}\big(X, \mathfrak{G})\big)\Bigr)=
 H^1_{Zar}\big( X, \bZ_\mathfrak{G}(\mathfrak{S})\big).
 $$ 
 Since  by Corollary~\ref{corvanish} our groups are always quasi-split, the centralizer 
 $\widetilde{\mathfrak{S}}={\bZ}_\mathfrak{G}(\mathfrak{S})$ is always a maximal torus  of 
 $\mathfrak{G}.$ Since $\mathfrak{G}$ is of adjoint type, %this maximal torus 
 $\widetilde{\mathfrak{S}}$ is 
 the direct product of a split $R'$-torus and
 a Weil 
 restriction $R_{S'/R'}(G_{m,S'})$  of a one-dimensional split $S'$-torus %$\mathfrak{T}$ 
 where $S'/R'$ is an \'etale extension of 
 $R'$ of degree $2$ or $3$.

By Shapiro's lemma $H^1(R', \widetilde{\mathfrak{S}})$ vanishes whenever $S'$ is 
isomorphic to the coordinate ring of the affine line minus a finite number of points. This is 
the case whenever our dessin is of genus $0$, namely in all cases except for case $(c,c)$ in 
type $D_4.$ In this exceptional case the situation is quite the opposite. The scheme $S'$ 
is a non-empty 
open affine subscheme of an elliptic curve over an algebraically closed field, 
hence $H^1(S',G_{m,S'})=H^1_{Zar}(\X,\widetilde{\mathfrak{S}})$ is infinite.
It follows that there are  
infinitely many conjugacy classes of maximal split tori inside the corresponding twisted adjoint  
$R'$--group $\mathfrak{G}$ of type $D_4$.

\section{Conjugacy questions for three-point Lie algebras}

Let $\CL$ be a twisted form of $\gg \otimes_k R'$. The role of split Cartan subalgebras 
for the infinite dimensional $k$-Lie algebra $\CL$ is played by maximal abelian
$k$-diagonalizable subalgebras. Recall that a subalgebra $\bk$ of
$\CL$ is $k${\it -diagonalizable} if there exists a
$k$-basis $v_\lambda  ,$ $\lambda  \in \Lambda $, of 
%$\gg\otimes R'$ 
$\CL$ consisting of eigenvectors for the adjoint action of $\bk$,
i.e.
%\medskip
for all $\lambda  \in \Lambda  $ and $x\in\bk,$ there exists
$\lambda  (x)\in k$ such that $[x,v_\lambda  ]=\lambda  (x)v_\lambda
 .$
\medskip
A $k$-diagonalizable subalgebra of %$\gg\otimes R_{\ba}$ 
$\CL$ is
necessarily abelian.  The MADs are those $k$-diagonalizable
subalgebras which are maximal with respect to inclusion.

Using the correspondence between maximal split $R'$-tori of $\mathfrak{G}$ and 
MADs of $\CL = {\rm Lie}(\mathfrak{G})$ given in  \cite[Theorem 7.1]{CGP} and an 
obvious fact that a maximal split
torus in $\mathfrak{G}$ is generically maximal split (because by Corollary~\ref{corvanish} 
there are no 
$R'$-anisotropic semisimple group schemes) 
we obtain the following.

\begin{theorem} %With the notation as above. 
If $\CL$ is not of dessin type $(c,c)$ there exists a single conjugacy class of MADs under the adjoint action of $\mathfrak{G}(R)$ on $\CL.$ If $\CL$ is of type $(c,c)$ the number of conjugacy classes of MADs is infinite. \qed
\end{theorem}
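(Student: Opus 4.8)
The plan is to deduce the theorem as a direct consequence of the correspondence between maximal split $R'$-tori of $\mathfrak{G}$ and MADs of $\CL = \Lie(\mathfrak{G})$ from \cite[Theorem 7.1]{CGP}, combined with the cohomological classification of conjugacy classes of generically maximal split tori established in Section 8. First I would observe that, as already noted in the paragraph preceding the statement, a maximal split torus $\mathfrak{S}$ of $\mathfrak{G}$ is automatically generically maximal split: by Corollary~\ref{corvanish} every semisimple group scheme over $R'$ is quasi-split, so there are no $R'$-anisotropic semisimple group schemes, and hence the split rank cannot drop upon passing to the generic point. This places us squarely in the setting of the last proposition of Section 8, so that the $\mathfrak{G}(R')$-conjugacy classes of maximal split tori are classified by $\ker\big( H^1_{Zar}(X, {\bZ}_\mathfrak{G}(\mathfrak{S})) / \bW(\mathfrak{S})(X) \to H^1_{Zar}(X, \mathfrak{G})\big)$ with $X = \Spec(R')$.

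The core computation is the one carried out at the end of Section 8, which I would simply invoke. Since $H^1(X,\mathfrak{G}) = 1$ by Proposition~\ref{vanish}, the kernel above is the full quotient $H^1_{Zar}(X, \widetilde{\mathfrak{S}})/\bW(\mathfrak{S})(X)$, where $\widetilde{\mathfrak{S}} = {\bZ}_\mathfrak{G}(\mathfrak{S})$ is the maximal torus of the quasi-split group $\mathfrak{G}$. As recorded there, $\widetilde{\mathfrak{S}}$ is the product of a split torus and a Weil restriction $R_{S'/R'}(G_{m,S'})$, so by Shapiro's lemma $H^1(R', \widetilde{\mathfrak{S}})$ is controlled by $H^1(S', G_{m,S'}) = \Pic(S')$. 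When the dessin has genus $0$, the scheme $S'$ is (an automorphism-twist of) the affine line minus finitely many points, whose Picard group vanishes, forcing $H^1_{Zar}(X,\widetilde{\mathfrak{S}}) = 1$ and hence a single conjugacy class of MADs. This covers every case except $(c,c)$ in type $D_4$.

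For the remaining type $(c,c)$, the point is precisely the opposite: the cover has genus $1$, so $S'$ is a non-empty open affine subscheme of an elliptic curve over $k$, and $\Pic(S')$ is infinite. Consequently $H^1_{Zar}(X,\widetilde{\mathfrak{S}})$ is infinite, and since $\bW(\mathfrak{S})(X)$ is finite (by Lemma~\ref{critical}, as $\bW(\mathfrak{S})$ is a finite group scheme), the quotient remains infinite. This yields infinitely many conjugacy classes of maximal split tori, hence infinitely many conjugacy classes of MADs.

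I expect the main obstacle to be purely one of bookkeeping rather than of new mathematical content: one must verify that the correspondence of \cite[Theorem 7.1]{CGP} genuinely matches $\mathfrak{G}(R')$-conjugacy classes of maximal split tori with conjugacy classes of MADs under the adjoint action of $\mathfrak{G}(R')$ on $\CL$, so that the counting transfers faithfully. The subtle point is ensuring that ``maximal split torus'' on the group side corresponds exactly to ``MAD'' on the Lie algebra side, and that the adjoint action used in the theorem's statement is the same action governing the cohomological classification; granting \cite[Theorem 7.1]{CGP}, this is immediate, and the genus dichotomy from the end of Section 8 then delivers the result at once.
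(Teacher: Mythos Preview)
Your proposal is correct and follows essentially the same route as the paper: the theorem is stated there with a \qed because its proof is entirely contained in the paragraph preceding it together with the computation at the end of Section~8, and your write-up reproduces precisely that reasoning. The one small addition you make---observing that $\bW(\mathfrak{S})(X)$ is finite so that the quotient $H^1_{Zar}(X,\widetilde{\mathfrak{S}})/\bW(\mathfrak{S})(X)$ remains infinite in the $(c,c)$ case---is a detail the paper leaves implicit but which is indeed needed and follows from Lemma~\ref{critical}.
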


%We finish the paper with an useful observation. By considering the \'etale extension 
%of $R_a$ with no trivial Picard group and considering the Weil restriction of a split torus 
%we see that $H^1(R_a,\mathfrak{G})$ need not vanish for an arbitrary reductive 
%$R_a$--group. This cannot happen, however, if the group comes from $R.$ Indeed.

%\begin{proposition}\label{vanishing}
%Let $\mathfrak{G}$ be a reductive $R$-group.  then $H^1(R',\mathfrak{G}_{R'}) =1.$
%\end{proposition}

%\begin{proof}
%Let $[\gamma  ]\in H^1(R',\mathfrak{G}).$ %\footnote{Following standard conventions we 
%write $H^1(R',\mathfrak{G})$ instead of $H^1(R',\mathfrak{G}_{R'}).$}  
%By Harder's Lemma (\cite[Lemma
%4.1.3]{H2}) the obstruction to extending $\gamma  $ to $H^1(R,\mathfrak{G})$ is
%measured by
%$$
 %\Im \big(H^1( R[[t-1]],\mathfrak{G}) \to H^1\big({k((t-1))},\mathfrak{G})\big).
%$$
%But $k((t-1))$ is of cohomological dimension
%$1$  so that by Steinberg's theorem
%\newline
%$H^1\big(k((t-1)),\mathfrak{G}\big) =1.$  It
%follows that $\gamma  $ extends to an $R$-torsor under $\mathfrak{G}.$ Since
%by \cite{P1} Theorem 3.1(i) $H^1(R,\mathfrak{G})=1,$ the Proposition follows.
%\end{proof}

%\vs\noi
%{\bf Remark} An induction argument shows that $H^1(R_{\ba},\mathfrak{G}) =1$
%for all $\ba = \{a_1,\dots,a_n\}\subset k^\times.$

\end{document}